\newtheorem{theorem}{Theorem}[section]
\newtheorem{proposition}[theorem]{Proposition}
\newtheorem{lemma}[theorem]{Lemma}
\newtheorem{corollary}[theorem]{Corollary}
\newtheorem{definition}[theorem]{Definition}
\theoremstyle{definition}
\numberwithin{equation}{section}
\newtheorem{example}[theorem]{Example}
\newcommand{\krein}{Kre{\u\i}n }
\newcommand{\ip}[2]{\left<#1,#2\right>}
\newcommand{\cF}{{\mathcal F}}
\newcommand{\cG}{{\mathcal G}}
\newcommand{\cH}{{\mathcal H}}
\newcommand{\cK}{{\mathcal K}}
\newcommand{\cL}{{\mathcal L}}
\newcommand{\cM}{{\mathcal M}}
\newcommand{\cN}{{\mathcal N}}
\newcommand{\cR}{{\mathcal R}}
\newcommand{\cS}{{\mathcal S}}
\newcommand{\cW}{{\mathcal W}}
\newcommand*\bigcdot{\mathpalette\bigcdot@{1.8}}
\newcommand*\bigcdot@[2]{{{\hbox{\scalebox{#2}{$\m@th#1.$}}}}}
\newcommand{\pdot}{\,{\overset{\smash{\bigcdot}}{+}}\,}
\newcommand{\oplusdot}{\,{\overset{\smash{\bigcdot}}{\oplus}}\,}
\newcommand{\bigoplusdot}{\,{\overset{\smash{\bigcdot}}{\bigoplus}}\,}
\newcommand{\fdh}{\cH = \cH_+ \oplus \cH_-}
\newcommand{\lh}{\cL(\cH)}
\newcommand{\lhk}{\cL(\cH,\cK)}
\newcommand{\ran}{{\rm ran\,}}
\renewcommand{\ker}{{\rm ker\,}}
\newcommand{\linL}{{\lambda\in\Lambda}}
\newcommand{\bbN}{{\mathbb N}}
\newcommand{\myspan}{{\rm span\,}}  % \span already defined
\newcommand{\dinD}{{d\in D}}
\begin{document}

\title[Rearrangement Invariant Sums. II]{Rearrangement Invariant
  Orthogonal Sums in Kre{\u\i}n Spaces. II}

\author[M.\ A.\ Dritschel]{Michael A.\ Dritschel}

\address{School of Mathematics, Statistics, \& Physics, Newcastle
  University, Newcastle upon Tyne, NE1 7RU, United Kingdom}

 \email{michael.dritschel@newcastle.ac.uk}

 \author[A.\ Maestripieri]{Alejandra Maestripieri}

 \address{Instituto Argentino de Matem\'atica ``Alberto
   P. Calder\'on'' CONICET, Saavedra 15, Piso 3, (1083) Buenos Aires,
   Argentina}

 \email{amaestripieri@gmail.com}

 \author[J.\ Rovnyak]{James Rovnyak (corresponding author)}
 
 \address{Department of Mathematics, University of Virginia, P.O.\ Box
   400137, Charlottesville, Virginia, U.S.A.}

\email{rovnyak@virginia.edu}

\keywords{\krein spaces, regular spaces, pseudo-regular subspaces,
  operator ranges, quasi-pseudo-regular subspaces, orthogonal sums,
  direct sums, Moore-Smith convergence, Moore-Smith sums}

 \subjclass{47B50, 47A05, 46C20}

 \dedicatory{To our colleague and friend Henk de Snoo, on his
   $80^{\mathrm{th}}$ birthday.}

  \date{\today}

\begin{abstract}
  Part I of the paper considered infinite orthogonal sums of regular
  subspaces in a \krein space (that is, of subspaces which are
  themselves \krein spaces).  How precisely these sums should be
  defined and conditions for when such sums are themselves regular
  were determined.  These included, for example, a boundedness
  condition for the sum of the corresponding orthogonal projections.
  The same problem is addressed here for (quasi-)pseudo-regular
  subspaces.  Such subspaces are orthogonal direct sums of a regular
  spaces and isotropic, or neutral, subspaces.  Alternate
  characterizations of such subspaces are given, and infinite
  orthogonal sums are examined via unconditional, or Moore-Smith, sums
  of operator ranges.
\end{abstract}

\maketitle

\section{Introduction and background}\label{S:Introduction}

A fundamental result in the study of Hilbert space geometry is that
orthogonal sums of Hilbert spaces are themselves Hilbert spaces.  This
works whether the number of spaces involved is finite or infinite.
\krein spaces are indefinite analogues of Hilbert spaces, and so it is
natural to ask which properties of Hilbert spaces carry over to these.

Beginning in 1979, Brian~McEnnis made a detailed study of this topic
in a series of papers considering shift operators on \krein spaces
\cite{McEnnis1,McEnnis2,McEnnis3}.  Part~I of the current
series~\cite{Rovnyak2025} contains an elaboration and expansion of
McEnnis' work.  This, an earlier work of the third author
\cite{Rovnyak2021}, and McEnnis' papers were primarily concerned with
\textsl{regular spaces}, which are the natural analogue in \krein
spaces of closed subspaces of Hilbert spaces, in that they are
themselves \krein spaces and orthogonal sums are automatically direct.

Part~II will examine orthogonal sums of \textsl{pseudo-regular} and
\textsl{quasi-pseudo-regular} or \textsl{qpr} spaces (formally defined
in Definition~\ref{defpseudoreg}), which generalize regular spaces in
that they are the orthogonal direct sum of regular spaces and neutral
subspaces (where all inner products are $0$).  A natural question is
how the orthogonal sum of such subspaces should be defined, and what
conditions are needed to guarantee that orthogonal sums exist.  Here,
even for finitely many orthogonal spaces, the sum need not exist (as a
space of the same type), or it may not be direct, along with other
possible difficulties.

Theorem~\ref{maintheorem1} originates in Part I of this
paper~\cite{Rovnyak2025} and highlights the status regarding
orthogonal sums of regular spaces.  There, infinite sums of regular
spaces are well-defined in terms of rearrangement invariance of
infinite direct sums of vectors from these spaces, or equivalently, by
means of a boundedness condition of related projection operators.
While it is perhaps too much to ask that such definitive results hold
for pseudo-regular and quasi-pseudo-regular spaces, analogues are
nevertheless available --- see Theorems~\ref{ThC} and~\ref{qpr_sums},
as well as Corollary~\ref{cor:appl_to_pr_spaces}.  The first of these
theorems is a strengthening of Theorem 4.3 in \cite{Rovnyak2021}, the
main ideas of which were discovered by the first two authors and
communicated to the third.

Quasi-pseudo-regular (or qpr) spaces are new, being an outgrowth of
work of Aurelian Gheondea on pseudo-regular
spaces~\cite{Gheondea1984,Gheondea2022}.  The latter spaces are
assumed to be closed, while this is relaxed in the
quasi-pseudo-regular setting to the assumption that they are operator
ranges.  Generalizations of Gheondea's results form the bulk of
Section~2.  A deeper understanding of infinite sums of operator ranges
is required when considering orthogonal sums of quasi-pseudo-regular
spaces.  Section~3 takes this up, expanding on fundamental work of
Fillmore and Williams~\cite{FW1971}.  Section~4 deals with nets of
projections in \krein spaces, playing a key part in understanding
orthogonal sums of pseudo-regular spaces.  The last two sections put
all of this together in theorems on infinite orthogonal sums of
pseudo-regular and quasi-pseudo-regular spaces.

Now for a brief summary of the needed background in \krein space
theory following in the spirit of~\cite{DRFields}.

Recall that a \krein space is a complex inner product space
$(\cH, {\ip{\cdot}{\cdot}}_{\cH})$ which admits a \textbf{fundamental
  decomposition} $\fdh$ such that $\cH_+$ is a Hilbert space and
$\cH_-$ is the anti-space of a Hilbert space (that is, a Hilbert space
with the inner product replaced by its negative).  Every \krein space
$\cH$ has a unique \textbf{strong topology} which is induced by any
such decomposition, though in general the decomposition is not unique.
A subspace $\cM$ is called \textbf{regular} if it is closed and is
itself a \krein space in the inner product of~$\cH$.

In analogy with Hilbert spaces, the range of a selfadjoint projection
operator on $\cH$ is a regular subspace, and every regular subspace is
the range of a unique selfadjoint projection.  Here
\textsl{selfadjoint} means with respect to the indefinite inner
product.

\goodbreak

\begin{definition}
  \label{def:orth-sum_and_direct_sum}
  Let be $\cM_1$ and $\cM_2$ be $($not necessarily closed$)$ subspaces
  of a \krein space $\cH$.
  \begin{enumerate}
  \item $\cM_1$ and $\cM_2$ are \textbf{orthogonal} if for any
    $f\in \cM_1$, $g\in \cM_2$, $\ip{f}{g} = 0$.  $($Unlike in Hilbert
    spaces, this does \emph{not} necessarily imply that
    $\cM_1 \cap \cM_2 = \{0\}$.$)$
  \item The sum of orthogonal subspaces $\cM_1$ and $\cM_2$ is written
    as $\cM_1 \oplus \cM_2$ and called the \textbf{orthogonal sum}.
  \item If $\cM_1 \cap \cM_2 = \{0\}$, the sum of $\cM_1$ and $\cM_2$
    is written as $\cM_1 \pdot \cM_2$ and is called the \textbf{direct
      sum}.
  \item If $\cM_1$ and $\cM_2$ are orthogonal and
    $\cM_1 \cap \cM_2 = \{0\}$, the sum of $\cM_1$ and $\cM_2$ is
    written as $\cM_1 \oplusdot \cM_2$ and is called the
    \textbf{orthogonal direct sum}.
  \end{enumerate}
\end{definition}
Definitions (2)--(4) extend obviously to any finite number of spaces.
The notation is at odds with that used in Part I \cite{Rovnyak2025},
but since the goal here is to consider orthogonal sums which are not
necessarily direct, the above notation is better suited to the task.

Finite orthogonal sums of closed spaces in a Hilbert space are direct
and closed, and likewise finite orthogonal sums of regular subspaces
are direct and regular.

\goodbreak

\begin{theorem}\label{finiteorthosums}
  Let $\cM_1,\dots,\cM_n$ be pairwise orthogonal regular subspaces of
  a \krein space $\cH$.
  \begin{itemize}
  \item[$(1)$] The span $\cM$ of $\cM_1,\dots,\cM_n$ is regular, and
    $\cM$ is the orthogonal direct sum of the subspaces:
    \begin{equation*}
      \cM = \cM_1\, \oplusdot \cdots \oplusdot \cM_n.
    \end{equation*}    
    The selfadjoint projection $P$ onto $\cM$ is given by
    $P = E_1 + \cdots + E_n$, where $E_1,\dots,E_n$ are the
    selfadjoint projections onto $\cM_1,\dots,\cM_n$.
  \item[$(2)$] Let $\widetilde\cM = \cM_1\times\dots\times\cM_n$ be
    the Cartesian product of the subspaces viewed as \krein spaces in
    the inner product of~$\cH$.  Then $\widetilde \cM$ is a \krein
    space, and the mapping
    \begin{equation*}
      U\colon (f_1,\dots,f_n) \to f_1 + \cdots + f_n      
    \end{equation*}
    from $\widetilde\cM$ onto $\cM$ is a \krein space isomorphism.
  \end{itemize}
\end{theorem}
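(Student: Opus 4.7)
The plan is to set $P = E_1 + \cdots + E_n$, where each $E_i$ is the selfadjoint projection onto $\cM_i$, and to show directly that $P$ is the selfadjoint projection onto the algebraic span $\cM$ of the $\cM_i$. All of part~(1) then falls out.

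The key calculation is $E_i E_j = 0$ for $i \ne j$: for any $f, g \in \cH$, selfadjointness gives $\ip{E_i E_j f}{g} = \ip{E_j f}{E_i g} = 0$ because $E_j f \in \cM_j$ and $E_i g \in \cM_i$ are orthogonal, and nondegeneracy of the inner product of $\cH$ yields $E_i E_j = 0$. With this in hand $P$ is selfadjoint and $P^2 = \sum_i E_i^2 + \sum_{i \ne j} E_i E_j = P$, so $P$ is a bounded selfadjoint projection whose range is therefore regular. Moreover, for $f_i \in \cM_i$ the same orthogonality argument gives $E_j f_i = \delta_{ij} f_i$, so $P$ fixes every element of $\cM$ and $\ran P = \cM$. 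Applying $E_j$ to any relation $f_1 + \cdots + f_n = 0$ then forces $f_j = 0$, so the sum is orthogonal direct in the sense of Definition~\ref{def:orth-sum_and_direct_sum}.

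For part~(2), fix fundamental decompositions $\cM_i = \cM_i^+ \oplusdot \cM_i^-$. Then $\widetilde\cM$ is a \krein space with the coordinatewise fundamental decomposition obtained by stacking the $\cM_i^\pm$. The map $U$ is linear, surjective by the definition of $\cM$, injective by the directness established above, and inner-product preserving because $\ip{U(f)}{U(g)}_\cH = \sum_{i,j}\ip{f_i}{g_j}_\cH = \sum_i \ip{f_i}{g_i}_\cH$ by the pairwise orthogonality of the $\cM_i$.

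To upgrade $U$ from an algebraic isometry to a \krein space isomorphism, I would verify that $\cM_1^+ + \cdots + \cM_n^+$ is a Hilbert subspace and $\cM_1^- + \cdots + \cM_n^-$ is an anti-Hilbert subspace of $\cM$: the summands are pairwise orthogonal and of the correct definite type, the sum is direct because it sits inside $\cM_1 \oplusdot \cdots \oplusdot \cM_n$, and a finite orthogonal direct sum of Hilbert (resp.\ anti-Hilbert) spaces in $\cH$ is again Hilbert (resp.\ anti-Hilbert). Their orthogonal direct sum is then a fundamental decomposition of $\cM$, and under $U$ this corresponds to the fundamental decomposition of $\widetilde\cM$ chosen above. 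The expected main obstacle is precisely this final coordination: being an algebraic inner-product isometry is strictly weaker than being a \krein space isomorphism, which also requires a strong-topology statement, and matching fundamental decompositions on the two sides is what bridges the gap; everything else is immediate from the relation $E_i E_j = 0$.
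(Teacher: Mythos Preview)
The paper states Theorem~\ref{finiteorthosums} as known background in the introduction and does not supply a proof, so there is nothing to compare against directly.

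Your argument is correct and is the standard one. The identity $E_iE_j=0$ for $i\ne j$ is the heart of part~(1), and everything you derive from it (selfadjointness and idempotence of $P$, regularity of $\ran P$, the identification $\ran P=\cM$, and directness) is valid. One small point you leave implicit: the inclusion $\ran P\subseteq\cM$ holds because $Pf=\sum_i E_if$ with each $E_if\in\cM_i$; you only state the reverse inclusion explicitly.

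For part~(2), your strategy of matching fundamental decompositions is sound. The step you flag as the potential obstacle---that $\sum_i\cM_i^+$ is a Hilbert subspace of $\cH$---does go through: the $\cM_i^+$ are themselves regular and pairwise orthogonal, so by part~(1) their span is regular, and a regular subspace on which the indefinite inner product is positive definite must be a Hilbert space (its negative part in any fundamental decomposition is forced to be zero). With matching fundamental decompositions on both sides, $U$ restricts to Hilbert-space isometries on the definite pieces and is therefore a homeomorphism. An alternative shortcut, once $\cM$ is known to be regular from part~(1), is to invoke the general fact that a linear bijection between \krein spaces preserving the indefinite inner product is automatically bicontinuous; this avoids the explicit coordination of fundamental decompositions.
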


As indicated earlier, the generalization of Theorem
\ref{finiteorthosums} to infinite families is standard for Hilbert
spaces.  This is not the case for \krein spaces.  While the orthogonal
sum of regular subspaces is direct, it may fail to be regular, or it
may be regular but not every vector in the closed span is
representable as a sum of vectors from the individual
subspaces~\cite[Example~7.3]{McEnnis1},
\cite[Example~6.2]{Rovnyak2025}.  Likewise, the notion of an infinite
Cartesian product is problematic for \krein spaces.

In what follows, $\mathrm{span}_\lambda\, \cM_\lambda$ stands for the
vector space of finite sums of $f_\lambda\in \cM_\lambda$, while
$\bigvee_\lambda \cM_\lambda$ is the norm closure of this space.
Finding conditions for when an orthogonal sum exists for various
classes of subspaces is the central goal of this paper.

Recall from part~I that for a Banach space $X$ with norm $\|\cdot\|$
and a sequence $x_1,x_2,\dots$ in~$X$, the infinite series
$\sum_{j=1}^\infty x_j$ is termed \textbf{unconditionally convergent}
if every rearrangement is convergent.  An infinite series can also be
viewed as an indexed sum with no regard to order.  An arbitrary
indexed sum $\sum_{\linL} x_\lambda$ of vectors in $X$ is said to be
\textbf{Moore-Smith convergent} with \textbf{sum} or
\textbf{Moore-Smith sum} $x$ if for every $\varepsilon > 0$ there is a
finite subset $\cF$ of $\Lambda$ such that
$\big\| x - \sum_{\lambda\in \cG} x_\lambda\big\| < \varepsilon$ for
every finite subset $\cG$ of $\Lambda$ that contains~$\cF$.  Such a
vector $x$ is unique and denoted
$x = \sum^{\scalebox{0.4}{\text{MS}}}_{\linL} x_\lambda$.

Equivalently, suppose $\{x_\lambda\}_{\linL} \subset X$.  The
collection of all finite subsets $\mathbb F = \{\cF\}$ of $\Lambda$
ordered by inclusion is a directed set.  Define
$x_\cF := \sum_{\lambda\in \cF} x_\lambda$.  Then
$(x_\cF)_{\cF\in\mathcal F}$ is a net.  Convergence of this net to $x$
(that is, $x_\cF \to x$) means that given any $\varepsilon > 0$, there
is an $\cF\in\mathcal F$ such that for all $\cG\in\mathbb F$ with
$\cG \geq \cF$, $\|x - x_\cG\| < \varepsilon$.  Since the space $X$ is
Hausdorff, limits are unique.  The existence of a limit $x$ for
$(x_\cF)_{\cF}$ is equivalent to
$x = \sum^{\scalebox{0.4}{\text{MS}}}_{\linL} x_\lambda$.  The
definition of the directed set $\mathbb F$ ensures that the sum is
rearrangement invariant~\cite{Hildebrandt1940}.

The following definitions are minor variations on those used in part~I
of the paper~\cite[Definition~3.1]{Rovnyak2025} for regular subspaces.

\begin{definition}\label{defbigoplus}
  Let $\{\cM_\lambda\}_{\linL}$ be subspaces of a particular type
  $($for example, closed, or regular$)$ in a \krein space~$\cH$.  The
  subspace of elements of\/ $\bigvee_\lambda \cM_\lambda$ which can be
  expressed as Moore-Smith sums of elements of these subspaces is
  called the \textbf{Moore-Smith sum} and is denoted by
  \begin{equation*}
    \sum^{\scalebox{0.5}{\text{MS}}}_{\linL} \cM_\lambda.
  \end{equation*}
  So $x \in \sum^{\scalebox{0.4}{\text{MS}}}_{\linL} \cM_\lambda$, if
  and only if $x$ can be expressed as
  $\sum^{\scalebox{0.4}{\text{MS}}}_{\linL} x_\lambda$,
  $x_\lambda \in \cM_\lambda$, where the sum is Moore-Smith
  convergent.

  Any space $\cM$ where $\cM$ is of the same type as those in
  $\{\cM_\lambda\}_{\linL}$, and for which
  \begin{equation}
    \label{sum_def}
    \mathrm{span}_\lambda\, \cM_\lambda \subseteq \cM \subseteq
    \sum^{\scalebox{0.5}{\text{MS}}}_\lambda \cM_\lambda,
  \end{equation}
  is called a \textbf{sum} of these subspaces.  When the sum is
  unique, it is denoted as
  \begin{equation*}
    \sum_\linL \cM_\lambda.
  \end{equation*}
  
  If the subspaces are pairwise orthogonal, call $\cM$ an
  \textbf{orthogonal sum}.  When unique, this is denoted by
  \begin{equation*}
    \bigoplus_\linL \cM_\lambda.
  \end{equation*}
  
  If additionally, the sum is direct $($that is, a Moore-Smith
  convergent sum of terms in the spaces $\cM_\lambda$, $\linL$, is
  zero only if each term is zero$)$ call $\cM$ an \textbf{orthogonal
    direct sum} of the subspaces, and write
  \begin{equation*}
    \overset{\smash{\bigcdot}}{\bigoplus_{\linL}} \cM_\lambda
  \end{equation*}
  when the sum is unique.
\end{definition}

\goodbreak

It is worth emphasizing that all the sums are assumed to consist of
elements which are Moore-Smith sums.  Clearly, the span of any finite
set of subspaces is the Moore-Smith sum of those spaces, and for any
collection $\{\cM_\lambda\}_{\linL}$ of subspaces,
$\mathrm{span}_\linL \cM_\lambda \subseteq
\sum^{\scalebox{0.4}{\text{MS}}}_{\linL} \cM_\lambda$ densely.

As will be demonstrated in Example~\ref{exmpl:minimal_or_not}, when
they exist, sums, orthogonal sums, and orthogonal direct sums need not
be unique, even if the spaces being summed happen to be closed.  This
occurs for example if the spaces considered are viewed as operator
ranges, and so the sum is also an operator range (and not necessarily
closed).

There will be circumstances where uniqueness does hold.  If the spaces
in the class being summed are assumed to be closed (for example,
regular spaces, and the sum is then regular), the next lemma shows
that the sum will be unique.

\begin{lemma}\label{sum_closures}
  If $\{\cM_\lambda\}_{\linL}$ are subspaces of a \krein space and a
  sum $\cM$ exists, then its closure $\widebar\cM$ is unique and
  coincides with $\bigvee_\linL\cM_\lambda$.  In particular, if
  $\{\cM_\lambda\}_{\linL}$ is a family from a class of closed
  subspaces and a sum of the family $\{\cM_\lambda\}_{\linL}$ exists
  $($and so is also closed$)$, then it is unique and
  \begin{equation*}
    \sum_\linL \cM_\lambda = \sum^{\scalebox{0.5}{\text{MS}}}_{\linL}
    \cM_\lambda = \bigvee_\linL \cM_\lambda. 
  \end{equation*}
\end{lemma}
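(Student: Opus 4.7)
The approach is a short closure chase applied to the defining sandwich $\mathrm{span}_\linL\, \cM_\lambda \subseteq \cM \subseteq \sum^{\scalebox{0.4}{\text{MS}}}_{\linL} \cM_\lambda$ from \eqref{sum_def}. Two simple observations do all of the work. First, by definition, $\bigvee_\linL \cM_\lambda$ is the norm closure of $\mathrm{span}_\linL\, \cM_\lambda$. Second, any element $x = \sum^{\scalebox{0.4}{\text{MS}}}_{\linL} x_\lambda$ with $x_\lambda \in \cM_\lambda$ is, by the Moore-Smith definition recalled earlier in the excerpt, the norm limit of the net of finite partial sums $x_F = \sum_{\lambda \in F} x_\lambda$, each of which already lies in $\mathrm{span}_\linL\, \cM_\lambda$; hence $\sum^{\scalebox{0.4}{\text{MS}}}_{\linL} \cM_\lambda \subseteq \bigvee_\linL \cM_\lambda$.

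Taking closures throughout the sandwich then produces the chain $\bigvee_\linL \cM_\lambda \subseteq \overline{\cM} \subseteq \overline{\sum^{\scalebox{0.4}{\text{MS}}}_{\linL} \cM_\lambda} \subseteq \bigvee_\linL \cM_\lambda$, forcing equality throughout. In particular $\overline{\cM} = \bigvee_\linL \cM_\lambda$, and the right-hand side depends only on the family $\{\cM_\lambda\}_{\linL}$ and not on the particular choice of sum $\cM$. This gives the first assertion.

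For the second assertion, assume the prescribed class consists of closed subspaces and that $\cM$ is a sum of the family in this class, so $\cM$ is itself closed. Then $\cM = \overline{\cM} = \bigvee_\linL \cM_\lambda$ by the first part, and the sandwich now reads $\mathrm{span}_\linL\, \cM_\lambda \subseteq \bigvee_\linL \cM_\lambda \subseteq \sum^{\scalebox{0.4}{\text{MS}}}_{\linL} \cM_\lambda \subseteq \bigvee_\linL \cM_\lambda$, collapsing $\cM$, $\sum^{\scalebox{0.4}{\text{MS}}}_{\linL} \cM_\lambda$, and $\bigvee_\linL \cM_\lambda$ to one and the same closed subspace. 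This yields both uniqueness of $\cM$ and the displayed triple equality.

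The argument presents no substantive obstacle; it is purely set-theoretic once one observes that Moore-Smith convergent sums are by definition norm limits of their finite partial subsums, and that $\bigvee_\linL \cM_\lambda$ denotes the closure of the algebraic span, both conventions having already been fixed in the excerpt. The only mild point worth flagging is that the sandwich \eqref{sum_def} is precisely what makes the two bounding sets $\mathrm{span}_\linL\, \cM_\lambda$ and $\sum^{\scalebox{0.4}{\text{MS}}}_{\linL} \cM_\lambda$ share a common closure, which is what drives uniqueness.
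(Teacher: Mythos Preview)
Your proof is correct and essentially identical to the paper's own argument: both take closures across the defining sandwich \eqref{sum_def} to obtain the chain $\bigvee_\linL \cM_\lambda \subseteq \overline{\cM} \subseteq \overline{\sum^{\scalebox{0.4}{\text{MS}}}_{\linL} \cM_\lambda} \subseteq \bigvee_\linL \cM_\lambda$, and then specialize to the closed case. Your version is slightly more verbose in justifying why Moore-Smith sums lie in $\bigvee_\linL \cM_\lambda$, but the logic is the same.
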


\begin{proof}
  By \eqref{sum_def}, if $\cM$ is a sum of the family
  $\{\cM_\lambda\}_{\linL}$,
  \begin{equation}\label{sum_def_2}
    \bigvee_\linL \cM_\lambda = \overline{\myspan_{\linL}
      \cM_\lambda} \subseteq \widebar \cM \subseteq
    \overline{ \sum^{\scalebox{0.5}{\text{MS}}}_\linL \cM_\lambda}
    \subseteq \bigvee_\linL \cM_\lambda.
  \end{equation}
  Therefore $\widebar \cM = \bigvee_\linL \cM_\lambda$.  If the
  subspaces $\{\cM_\lambda\}_{\linL}$ are closed and a sum $\cM$
  exists, then by definition it has to be closed, and by
  \eqref{sum_def_2}, $\cM$ is unique and
  \begin{equation*}
    \cM = \sum_\linL \cM_\lambda =
    \sum^{\scalebox{0.5}{\text{MS}}}_{\linL} \cM_\lambda 
    =\bigvee_\linL \cM_\lambda.
  \end{equation*}
\end{proof}

Here is the theorem which acts as a model for the results on infinite
orthogonal pseudo-regular and quasi-pseudo-regular spaces.

\begin{theorem}[\!\!\cite{Rovnyak2025}]
  \label{maintheorem1}
  Let $\{\cM_\lambda\}_{\linL}$ be pairwise orthogonal regular
  subspaces of a \krein space~$\cH$, and let $\{E_\lambda\}_{\linL}$
  be the selfadjoint projections in $\lh$ such that
  $\cM_\lambda = \ran E_\lambda$, $\linL$.  The following are
  equivalent:
  \begin{enumerate}
  \item[(1)] The orthogonal direct sum of the subspaces exists
    uniquely and
    \begin{equation*}
      \bigoplusdot_\linL \cM_\lambda = \bigvee_\linL \cM_\lambda.
    \end{equation*}
  
    \smallskip
  \item[(2)] For some and hence any norm $\|\cdot\|$ for~$\cH$, there
    is a constant $C > 0$ such that
    $\|\sum_{\lambda\in \cF} E_\lambda \| \le C$ for every finite
    subset $\cF$ of $\Lambda$.

    \smallskip
  \item[(3)] There is an operator $P\in\lh$ such that
    $P=\sum^{\scalebox{0.4}{\text{MS}}}_{\linL} E_\lambda$ in the
    sense that
    \begin{equation*}
      Pf = \sum^{\scalebox{0.5}{\text{MS}}}_{\linL} E_\lambda f,
      \qquad f\in\cH.
    \end{equation*}
  \item[(4)] There is a family $\{F_\lambda\}_\linL$ of selfadjoint
    projections on a Hilbert space~$\cK$, and an invertible operator
    $X\in\lhk$, such that $E_\lambda = X^{-1} F_\lambda X$ for all
    $\linL$.

    \smallskip
  \item[(5)] The subspace $\cM=\bigvee_\linL\cM_\lambda$ is regular,
    and $\sum_\linL \| E_\lambda f\|^2<\infty$ for all $f\in\cM$ for
    some and hence any associated norm for~$\cH$.
  \end{enumerate}
  In this case, the operator $P$ in $(3)$ is the selfadjoint
  projection with range~$\cM$.
\end{theorem}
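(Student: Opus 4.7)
My plan is to prove the theorem via a web of implications, treating (4) as the natural ``normal form'' from which (2), (3), and (5) follow immediately. If $E_\lambda = X^{-1} F_\lambda X$ with $\{F_\lambda\}_\linL$ a family of pairwise orthogonal selfadjoint projections on a Hilbert space $\cK$, then $\|\sum_{\lambda\in\cF} F_\lambda\| \le 1$ and $\sum^{\text{MS}} F_\lambda$ converges strongly to the Hilbert-space projection onto $\bigvee_\linL \ran F_\lambda$. Conjugating by $X^{\pm 1}$ yields (2) and (3) with $P = X^{-1}(\sum^{\text{MS}} F_\lambda) X$, while Bessel's inequality $\sum_\linL \|F_\lambda Xf\|^2 \le \|Xf\|^2$ transports to $\sum_\linL \|E_\lambda f\|^2 < \infty$, giving (5).

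Next I would close the equivalences among (1), (2), and (3). For (3) $\Rightarrow$ (2), Banach--Steinhaus applied to the pointwise convergent net $P_\cF := \sum_{\lambda\in\cF} E_\lambda$ supplies the uniform bound. For (3) $\Rightarrow$ (1), I show $P$ is the selfadjoint projection onto $\cM := \bigvee_\linL \cM_\lambda$, using $E_\lambda^2 = E_\lambda$, $E_\lambda^* = E_\lambda$, and $E_\lambda E_\mu = 0$ for $\lambda \neq \mu$ (the last deduced from Krein-orthogonality of $\cM_\lambda,\cM_\mu$ combined with non-degeneracy of the inner product on each regular $\cM_\lambda$). These identities collapse the double sum $P^2 f = \sum_\mu \sum_\lambda E_\mu E_\lambda f$ to $Pf$ after a Moore--Smith continuity argument, and applying $E_\mu$ to any expansion $f = \sum^{\text{MS}} x_\lambda$ recovers $x_\mu = E_\mu f$, delivering uniqueness and directness. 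For (1) $\Rightarrow$ (3), the decisive observation is that Definition \ref{defbigoplus} forces the sum of regular subspaces to itself be regular, so $\cH = \cM \oplusdot \cM^\perp$; splitting $f = f_1 + f_2$, noting $E_\lambda f_2 = 0$, and invoking the Moore--Smith expansion of $f_1$ produces (3).

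For (5) $\Rightarrow$ (2), the closed graph theorem applied to $T\colon f \mapsto (E_\lambda f)_\linL \in \ell^2(\Lambda;\cH)$ first yields a uniform bound $\sum_\linL \|E_\lambda f\|^2 \le C^2 \|f\|^2$, which is then transferred to a bound on $\|P_\cF\|$ by exploiting the structure of $P_\cF$ as a selfadjoint projection onto a finite orthogonal direct sum. The main obstacle is (2) $\Rightarrow$ (4), where a Hilbert space $\cK$ and invertible $X \in \lhk$ must be conjured from a mere norm bound. My plan is first to upgrade (2) to (3): uniform boundedness forces $\cM$ to be non-degenerate (if $x \in \cM \cap \cM^\perp$ is approximated by $x_n \in \myspan_\linL \cM_\lambda$, then $\|x_n\| = \|P_{\cF_n} x_n\| \le C\|x_n - x\| \to 0$), so $\myspan_\linL \cM_\lambda + \cM^\perp$ is dense in $\cH$; on this dense subspace $P_\cF$ converges pointwise, uniform boundedness promotes this to strong convergence, and the multiplicative relation $P_\cF P_\cG = P_{\cF \cap \cG}$ ensures the limit $P$ is a selfadjoint idempotent with regular range $\cM$. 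With (3) in hand, (4) is produced by gluing fundamental symmetries $J_\lambda$ on the $\cM_\lambda$ into a fundamental symmetry $J_\cM := \sum^{\text{MS}} J_\lambda E_\lambda$ on $\cM$, extending to $\cH$ via $\cM^\perp$, and taking $\cK$ to be the resulting Hilbert space with $X$ the identification map. The delicate point throughout is executing these constructions without circularly assuming the regularity of $\cM$ one is trying to establish.
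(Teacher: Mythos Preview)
The paper does not prove Theorem~\ref{maintheorem1}. It is quoted verbatim from Part~I of the series (the citation \cite{Rovnyak2025} in the theorem header signals this), and serves here only as background for the pseudo-regular and quasi-pseudo-regular generalizations in Sections~\ref{sec:sums-pseudo-regular} and~\ref{sec:sums-quasi-pseudo}. Consequently there is no ``paper's own proof'' against which your proposal can be compared.

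That said, your overall architecture is sensible and broadly in line with how such results are typically established. A few comments on the proposal itself. Your argument for non-degeneracy of $\cM$ under~(2) is correct once one observes that $x\in\cM^\perp$ forces $E_\lambda x=0$ for every~$\lambda$ (since $\ker E_\lambda = \cM_\lambda^\perp$), so that $P_{\cF_n}x_n = P_{\cF_n}(x_n-x)$; you should make that step explicit. The identity you invoke for idempotency is $P_\cF P_\cG = P_{\cF\cap\cG}$, which is fine, but the passage from this to $P^2=P$ requires a short iterated-limit argument that deserves a sentence. In the construction for (3)$\Rightarrow$(4), defining $J_\cM f = \sum^{\text{MS}}_\linL J_\lambda E_\lambda f$ needs you to verify Moore--Smith convergence of this new series; it does not follow automatically from convergence of $\sum^{\text{MS}} E_\lambda f$, though it can be deduced once you have the $\ell^2$-bound from Corollary~\ref{maincorollary} (which you will have established by that point via (3)$\Rightarrow$(5) or directly). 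Finally, your claim under (4)$\Rightarrow$(5) that $\cM$ is regular is not addressed in your sketch; you need to show $X$ carries $\bigvee_\linL \cM_\lambda$ onto the Hilbert-space closed span $\bigvee_\linL \ran F_\lambda$, so that regularity transfers back.
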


\begin{corollary}[\!\!\cite{Rovnyak2021}]
  \label{maincorollary}
  When the equivalent conditions in Theorem $\ref{maintheorem1}$ hold,
  then for some and hence any associated norm for~$\cH$, the formula
  \begin{equation*}
    f = \sum^{\scalebox{0.5}{\text{MS}}}_{\linL} f_\lambda
  \end{equation*}
  determines a one-to-one correspondence
  $f \leftrightarrow \{f_\lambda\}_\linL$ between the elements $f$ of
  $\bigvee_\linL \cM_\lambda$ and the set of all families
  $\{f_\lambda\}_\linL$, $f_\lambda\in\cM_\lambda$, $\linL$, such that
  $\sum_\linL \|f_\lambda\|^2 < \infty$.  If $f$ and
  $\{f_\lambda\}_\linL$ are connected in this way,
  \begin{equation*}
    C_1 \|f\|^2 \le \sum_\linL \|f_\lambda\|^2 \le C_2 \| f \|^2 
  \end{equation*}
  for some constants $C_1 > 0$ and $C_2 > 0$.
\end{corollary}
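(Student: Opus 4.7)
My plan is to transfer the problem to a genuine Hilbert space using condition $(4)$ of Theorem~\ref{maintheorem1}. Fix an invertible $X\in\lhk$ and selfadjoint projections $\{F_\lambda\}_{\linL}$ on a Hilbert space $\cK$ with $E_\lambda = X^{-1}F_\lambda X$. Because $\cM_\lambda\perp\cM_\mu$ in $\cH$ for $\lambda\ne\mu$, Krein-selfadjointness of the $E_\lambda$ forces $E_\lambda E_\mu = 0$, whence $F_\lambda F_\mu = 0$, so the ranges $\cN_\lambda := \ran F_\lambda = X\cM_\lambda$ form a pairwise Hilbert-orthogonal family of closed subspaces of~$\cK$.

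Existence and uniqueness of the decomposition come directly from Theorem~\ref{maintheorem1}. For $f\in\cM := \bigvee_{\linL}\cM_\lambda$, condition $(5)$ gives that $\cM$ is regular with selfadjoint projection $P$, and then $(3)$ together with $Pf=f$ yields $f = \sum^{\scalebox{0.4}{\text{MS}}}_{\linL} E_\lambda f$ with $E_\lambda f\in\cM_\lambda$. If $f = \sum^{\scalebox{0.4}{\text{MS}}}_{\linL} g_\lambda$ is any other such representation, then $\sum^{\scalebox{0.4}{\text{MS}}}_{\linL}(g_\lambda - E_\lambda f) = 0$, and the directness clause of condition $(1)$ (cf.\ Definition~\ref{defbigoplus}) forces every summand to vanish. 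Thus $f\mapsto\{E_\lambda f\}_{\linL}$ is a well-defined injection.

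For the norm equivalence I would exploit the fact that any bounded linear map commutes with Moore-Smith summation: if $Y$ is bounded and $x = \sum^{\scalebox{0.4}{\text{MS}}}_{\linL} x_\lambda$, then $Yx = \sum^{\scalebox{0.4}{\text{MS}}}_{\linL} Y x_\lambda$, since $\|Yx - \sum_{\lambda\in G} Y x_\lambda\| \le \|Y\|\,\|x - \sum_{\lambda\in G} x_\lambda\|$ allows us to match any $\varepsilon$-tail for $x$ with one for $Yx$. Applied with $Y = X$, this gives $Xf = \sum^{\scalebox{0.4}{\text{MS}}}_{\linL} Xf_\lambda$ with summands in the pairwise orthogonal $\cN_\lambda$, so ordinary Hilbert-space Parseval delivers $\|Xf\|^2 = \sum_{\linL}\|Xf_\lambda\|^2$. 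Combining with the two-sided bound $\|X^{-1}\|^{-1}\|h\| \le \|Xh\| \le \|X\|\,\|h\|$ applied both to $f$ and to each $f_\lambda$ produces constants $C_1, C_2 > 0$ depending only on $\|X\|$ and $\|X^{-1}\|$ that sandwich $\sum_{\linL}\|f_\lambda\|^2$ between multiples of $\|f\|^2$.

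Surjectivity onto the square-summable families then follows by running the argument backwards: if $\sum_{\linL}\|f_\lambda\|^2 < \infty$, then $\sum_{\linL}\|Xf_\lambda\|^2 < \infty$ in $\cK$, so $g := \sum^{\scalebox{0.4}{\text{MS}}}_{\linL} Xf_\lambda$ exists, and applying the bounded operator $X^{-1}$ recovers $\sum^{\scalebox{0.4}{\text{MS}}}_{\linL} f_\lambda = X^{-1}g \in \cH$. The ``some and hence any'' clause is automatic, since any two associated norms on $\cH$ are equivalent. I do not foresee a real obstacle; the argument is a clean transfer from the Krein to the Hilbert setting, with the only technical point being the passage of bounded operators through Moore-Smith sums.
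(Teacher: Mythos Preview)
The paper does not include a proof of this corollary; it is simply cited from \cite{Rovnyak2021}. So there is no ``paper's own proof'' to compare against here.

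Your argument is correct and quite natural. The transfer via condition~(4) of Theorem~\ref{maintheorem1} is exactly the right idea: once the $F_\lambda$ are seen to be pairwise Hilbert-orthogonal (from $E_\lambda E_\mu = 0$), the standard Parseval identity in $\cK$ does all the work, and boundedness of $X$ and $X^{-1}$ transports the estimates back. A couple of minor points worth tightening: first, you invoke condition~(5) for the regularity of $\cM$ and then separately reach for~(3) to get $Pf=f$; it would be cleaner to note that the final sentence of Theorem~\ref{maintheorem1} already identifies the $P$ of~(3) as the selfadjoint projection onto~$\cM$. Second, you should say explicitly that the decomposition $f_\lambda = E_\lambda f$ you produce actually lands in the square-summable families---this is of course the content of~(5), or equivalently falls out of your norm inequality once established, but it deserves a sentence so the reader sees that the map $f\mapsto\{E_\lambda f\}_\linL$ really hits the stated target. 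Otherwise the proof is complete; the passage of bounded operators through Moore-Smith sums is indeed the only technical lemma needed, and you have justified it.
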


For background to Theorem \ref{maintheorem1} and previous results, see
the citations in \cite{Rovnyak2025} and the remarks following Theorem
\ref{ThB} below.

\goodbreak

\section{Pseudo-regular and quasi-pseudo-regular subspaces}
\label{S:pseudoregular}

There are various ways to weaken the condition of regularity,
including those defining pseudo-regularity and
quasi-pseudo-regularity.  A compelling reason for the interest in
pseudo-regular subspaces is that every closed subspace of a Pontryagin
space (that is, a \krein space $\cH$ where either $\cH_+$ or $\cH_-$
is finite dimensional) is pseudo-regular (see Proposition 3.2.9
in~\cite{Gheondea2022} or
Corollary~\ref{cor:closed_subsp_in_Pontr_sp_ps-reg} below), while in
any \krein space, pseudo-regular spaces are quasi-pseudo-regular and
as will be seen, the latter have better properties under orthogonal
direct sums.  Several equivalent ways of defining pseudo-regular
subspaces exist (see Gheondea \cite[\S3.2]{Gheondea2022}).  The one
highlighting its connection with regularity is chosen here.

\goodbreak
\begin{definition}\label{defpseudoreg}
  A subspace $\cS$ of a \krein space $\cH$ is
  \textbf{quasi-pseudo-regular} $($\textbf{qpr} for short$)$;
  respectively, \textbf{pseudo-regular}, if
  \begin{enumerate}
  \item[(1)] $\cS$ is an operator range; respectively, closed; and
    
    \smallskip
  \item[(2)] $\cS = \cR\oplus\cN$, where $\cR$ is regular and $\cN$ is
    a neutral subspace.
  \end{enumerate}
\end{definition}

Lemma \ref{lem:M_closed_then_R_plus_M_closed} shows that the sum in
part (2) of Definition \ref{defpseudoreg} is in fact direct.

An \textbf{operator range} is the range of a bounded Hilbert space
operator.  Since \krein spaces can be identified with not necessarily
unique, but all equivalent, Hilbert spaces, the range of any bounded
\krein space operator is an operator range in this sense.  Using the
polar decomposition, one sees that any operator range is the range of
a positive (that is, positive semi-definite) Hilbert space operator.

As noted, closed subspaces of Pontryagin spaces are pseudo-regular,
though this does not hold for general \krein spaces.  Moreover,
orthogonal direct sums of pseudo-regular subspaces may fail to be
pseudo-regular, making the consideration of direct sums of such spaces
more complex.

\begin{example}\label{oct29a2024}
  \label{exmpl:sum_of_pr_not_qpr}
  Let $\cH$ be a \krein space with fundamental decomposition $\fdh$
  such that both $\cH_+$ and $\cH_-$ are infinite dimensional.  Then
  $\cH$ contains a closed neutral subspace $\cH_0$ with
  $\dim\cH_0=\infty$.  Choose closed subspaces $\cN$ and $\cN'$ of
  $\cH_0$ in such a way that $\cN\cap\cN'=\{0\}$ while the angle
  between them is zero (Halmos \cite[\S15]{Halmos1951}).  Since
  $\cH_0$ is neutral, $\cN\perp\cN'$.  Then $\cN\oplus\cN'$ is not
  closed, and so while $\cN$ and $\cN'$ are pseudo-regular,
  $\cN\oplus\cN'$ is not.
\end{example}

By contrast, as will be shown in Proposition~\ref{oct28b2024},
quasi-pseudo-regular subspaces are closed under the formation of
orthogonal sums.

As the next lemma shows, the difference between a qpr and
pseudo-regular space is that for the latter, the neutral summand is
necessarily closed.  Pseudo-regular subspaces are quasi-pseudo-regular
since any closed space is the range of a bounded Hilbert space
projection.  Regular subspaces are pseudo-regular spaces with zero
neutral component.

In any qpr space, $\cN$ in part (2) of Definition \ref{defpseudoreg}
is uniquely determined.

\begin{lemma}\label{lem:Lemma0}
  Let $\cH$ be a \krein space and assume that $\cS=\cR\oplus\cN$,
  where $\cR$ is regular and $\cN$ is neutral.  Then $\cN=\cS^0$,
  where $\cS^0 = \cS\cap\cS^\perp$ is the isotropic part of~$\cS$.
  Furthermore, $\overline{\cS^0} = {\overline{\cS}}^0$.  Also, $\cN$
  is closed, respectively, an operator range, if and only if $\cS$ is
  pseudo-regular, respectively, qpr.
\end{lemma}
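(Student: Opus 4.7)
The plan is to prove the three assertions in order, with the expression $\overline{\cS} = \cR + \overline{\cN}$ serving as the central structural claim.

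For the first identity $\cN = \cS^0$, the inclusion $\cN \subseteq \cS^0$ is immediate: $\cN$ is neutral, so $\cN \perp \cN$, and by hypothesis $\cN \perp \cR$, hence $\cN \perp \cR + \cN = \cS$, giving $\cN \subseteq \cS \cap \cS^\perp = \cS^0$. For the reverse, given $s \in \cS^0$, fix a decomposition $s = r + n$ with $r \in \cR$, $n \in \cN$. For any $r' \in \cR \subseteq \cS$ one has $\ip{s}{r'} = 0$ and $\ip{n}{r'} = 0$, so $\ip{r}{r'} = 0$; since $\cR$ is regular and therefore non-degenerate as a \krein space, $r = 0$ and $s = n \in \cN$. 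Note that this argument does not require the sum $\cR + \cN$ to be direct.

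For the second assertion $\overline{\cS^0} = \overline{\cS}^0$, I first establish the structural identity $\overline{\cS} = \cR + \overline{\cN}$. Let $E \in \lh$ be the selfadjoint projection onto $\cR$. The hypothesis $\cN \perp \cR$ gives $\cN \subseteq \ker E$, so for $s = r + n \in \cS$, $Es = r \in \cR$ and $(I-E)s = n \in \cN$. Continuity of $E$ and $I - E$, together with closedness of $\cR$, transfers this decomposition to $\overline{\cS}$, yielding $\overline{\cS} \subseteq \cR + \overline{\cN}$; the reverse inclusion is clear from $\cR, \cN \subseteq \cS$. With this in hand, $\overline{\cS^0} = \overline{\cN} \subseteq \overline{\cS}$, and since $\cN \subseteq \cS^\perp$ and $\cS^\perp$ is closed, $\overline{\cN} \subseteq \cS^\perp = \overline{\cS}^\perp$, giving $\overline{\cS^0} \subseteq \overline{\cS}^0$. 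Conversely, for $s \in \overline{\cS}^0$ write $s = r + n'$ with $r \in \cR$, $n' \in \overline{\cN} \subseteq \ker E$; since $s \perp \cR$ and $n' \perp \cR$, we have $r \perp \cR$, and non-degeneracy of $\cR$ forces $r = 0$, so $s = n' \in \overline{\cN} = \overline{\cS^0}$.

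The last assertion now follows with little effort. If $\cS$ is closed, $\cN = \cS \cap \cS^\perp$ is an intersection of two closed sets and hence closed; conversely, if $\cN$ is closed, $\overline{\cS} = \cR + \overline{\cN} = \cR + \cN = \cS$. Analogously, if $\cS$ is an operator range, then $\cN = (I-E)\cS$ is the image of an operator range under the bounded operator $I-E$ and is therefore an operator range; and if $\cN$ is an operator range, then $\cS = \cR + \cN$ is a sum of two operator ranges ($\cR$ being closed), hence an operator range by the Fillmore--Williams lattice result~\cite{FW1971}. The main technical step is the identity $\overline{\cS} = \cR + \overline{\cN}$; once that is in hand everything else amounts to bookkeeping around non-degeneracy of $\cR$ and closedness of $\cS^\perp$.
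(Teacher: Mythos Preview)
Your proof is correct and follows the paper's approach: both use non-degeneracy of $\cR$ for $\cN = \cS^0$, the identity $\overline{\cS} = \cR + \overline{\cN}$ for the second claim, and standard operator-range facts for the last. You are somewhat more thorough---justifying $\overline{\cS} = \cR + \overline{\cN}$ explicitly via the projection $E$, supplying both directions of the final biconditional (the paper argues only one, leaving the other to Lemma~\ref{lem:M_closed_then_R_plus_M_closed}), and obtaining that $\cN$ is an operator range as $(I-E)\cS$ rather than as the intersection $\cS \cap \cS^\perp$.
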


\goodbreak

\begin{proof}
  Since $\cN$ is neutral, $\cN\perp\cN$, and so $\cN\perp\cS$ because
  $\cN\perp\cR$; hence $\cN\subseteq\cS\cap\cS^\perp$.  Clearly,
  $\cR^\perp\cap\cS\supseteq\cN$.  Conversely, suppose
  $x\in\cR^\perp\cap\cS$, and write $x=u+v$, $u\in\cR$, $v\in\cN$.
  For any $u'\in\cR$,
  \begin{equation*}
    0 = \ip{x}{u'} = \ip{u+v}{u'} = \ip{u}{u'},
  \end{equation*}
  so $u \in \cR^\bot$ and by the nondegeneracy of~$\cR$, $u=0$ and
  hence $x=v\in\cN$.  Thus $\cR^\perp\cap\cS=\cN$.  So
  $\cS^\perp\cap\cS \subseteq \cR^\perp\cap\cS=\cN$, and hence
  $\cS^\perp\cap\cS=\cN$.

  Since $\cR \subseteq \cS$ and $\cN \subset \cR^\bot$ by regularity
  of $\cR$, it follows that
  \begin{equation*}
    \cR \cap \cS^\perp = \cR \cap \cS \cap \cS^\perp =\cR \cap \cN
    \subset \cR \cap \cR^\bot = \{0\},
  \end{equation*}
  and so
  \begin{equation*} {\overline{\cS}}^0 = \overline{(\cR \oplus \cN)}
    \cap \cS^\perp = (\cR + \overline{\cN}) \cap \cS^\perp =
    \overline{\cN} \cap \cS^\perp = \overline{\cN} = \overline{\cS^0}.
  \end{equation*}

  For the last statement, if $\cS$ is pseudo-regular, and so closed,
  then $\cN$ is closed as it is the intersection of closed spaces.
  Likewise, if $\cS$ is an operator range, $\cS^\bot$ is also an
  operator range as it is closed, and so $\cN$ being the intersection
  of operator ranges is then an operator range
  \cite[Cor.~2,~p.~260]{FW1971}.
\end{proof}

Next, a couple of simple yet useful lemmas.

\begin{lemma}
  \label{lem:M_closed_then_R_plus_M_closed}
  If $\cR$ is a regular subspace of a \krein space $\cH$ and
  $\cM \subseteq \cR^\bot$ is closed $($respectively, an operator
  range$)$, then $\cR \cap \cM = \{0\}$ and
  $\cR \oplus \cM = \cR\, \dot{\oplus} \,\cM $ is closed
  $($respectively, an operator range$)$.
\end{lemma}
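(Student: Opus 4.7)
The plan is to push everything through the bounded selfadjoint projection $E \in \lh$ onto $\cR$, noting that $\ker E = \cR^\perp$.  Since $\cM \subseteq \cR^\perp$, $E$ annihilates $\cM$ while acting as the identity on $\cR$, so $E$ and $I - E$ furnish continuous left inverses of the two inclusion maps into $\cR + \cM$.  The triviality $\cR \cap \cM \subseteq \cR \cap \cR^\perp = \{0\}$ is immediate from regularity of $\cR$, and this upgrades $\cR \oplus \cM$ to $\cR \oplusdot \cM$ at no extra cost.

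For the closed case, suppose $x_n = r_n + m_n \to x$ with $r_n \in \cR$, $m_n \in \cM$.  Boundedness of $E$ and $I-E$ on $\cH$ (relative to some, hence any, associated norm) gives $r_n = E x_n \to E x$ and $m_n = (I - E) x_n \to (I - E) x$.  Closedness of $\cR$ and $\cM$ then forces $Ex \in \cR$ and $(I-E)x \in \cM$, whence $x = Ex + (I-E)x \in \cR + \cM$.

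For the operator-range case, fix an associated norm making $\cH$ a Hilbert space and write $\cM = \ran B$ for some $B \in \lh$.  The bounded operator $T \colon \cH \oplus \cH \to \cH$ defined by $T(u,v) = Eu + Bv$ has range $\ran E + \ran B = \cR + \cM$, exhibiting $\cR \oplusdot \cM$ as an operator range.  Equivalently, one may appeal to the Fillmore--Williams fact (already cited in the proof of Lemma~\ref{lem:Lemma0}) that a sum of two operator ranges is again an operator range, applied to $\cR = \ran E$ and $\cM$.

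There is no substantive obstacle here: the bounded projection $E$ does all the work.  The one point worth flagging is that ``closed'' and ``operator range'' for a subspace of $\cH$ are intrinsic notions, independent of which associated Hilbert norm is chosen, since any two such norms on $\cH$ are equivalent.
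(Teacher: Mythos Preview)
Your proof is correct and follows essentially the same line as the paper's: both rest on the fact that a regular $\cR$ is boundedly complemented by $\cR^\perp$, so that $\cR$ and $\cM\subseteq\cR^\perp$ split continuously, and both invoke Fillmore--Williams for the operator-range case. The only cosmetic difference is that the paper chooses a fundamental symmetry $J$ with $J\cR=\cR$ to make $\cR$ and $\cM$ Hilbert-orthogonal in $|\cH|$, whereas you work directly with the \krein selfadjoint projection $E$ and its kernel $\cR^\perp$.
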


\begin{proof}
  By regularity, $\{0\} = \cR \cap \cR^\bot \supseteq \cR \cap \cM$.
  Choose a fundamental symmetry $J$ such that $J\cR = \cR$ and write
  $|\cH|$ for the associated Hilbert space.  Then the Hilbert space
  orthogonal complement of $\cR$ in $|\cH|$ equals $\cR^\bot$, and so
  $\cR$ and $\cM$ are Hilbert space orthogonal.  If $\cM$ is assumed
  to be closed, it then follows that $\cR \oplus \cM$ is closed.  On
  the other hand, if $\cM$ is an operator range, since $\cR$ is the
  range of a Hilbert space projection it follows by Theorem~2.2 of
  \cite{FW1971} that the sum is an operator range.
\end{proof}

\begin{lemma}
  \label{lem:S+Sperp_closed}
  If $\cS$ is a qpr subspace of a \krein space $\cH$, then
  $\cS + \cS^\bot$ is closed.
\end{lemma}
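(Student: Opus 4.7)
The plan is to reduce $\cS + \cS^\bot$ to a sum of the form to which Lemma~\ref{lem:M_closed_then_R_plus_M_closed} applies directly.

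First I would use the definition of a qpr subspace to decompose $\cS = \cR \oplus \cN$, with $\cR$ regular and $\cN$ neutral. The key observation is that $\cN \subseteq \cS^\bot$: indeed, $\cN$ is orthogonal to $\cR$ by the decomposition, and $\cN \perp \cN$ since $\cN$ is neutral, so $\cN \perp (\cR \oplus \cN) = \cS$. (This is already essentially noted in the proof of Lemma~\ref{lem:Lemma0}.) Hence the neutral part collapses into the orthogonal complement and I can rewrite
\begin{equation*}
  \cS + \cS^\bot = \cR + \cN + \cS^\bot = \cR + \cS^\bot.
\end{equation*}

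Next, I would observe two facts about $\cS^\bot$. First, $\cS^\bot$ is closed; this holds for the orthogonal complement of any subset of a \krein space, since $\cS^\bot$ is the intersection of the kernels of the strongly continuous linear functionals $x \mapsto \ip{x}{s}$ for $s\in\cS$. Second, $\cS^\bot \subseteq \cR^\bot$, because $\cR \subseteq \cS$.

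With these facts in hand, Lemma~\ref{lem:M_closed_then_R_plus_M_closed} applies with $\cM = \cS^\bot$: the hypothesis $\cM \subseteq \cR^\bot$ is exactly the second fact, and $\cM$ closed is the first. The conclusion gives that $\cR \oplus \cS^\bot = \cR \pdot \cS^\bot$ is closed, which is precisely the desired closedness of $\cS + \cS^\bot$. There is no real obstacle here; the only subtlety is noticing that absorbing $\cN$ into $\cS^\bot$ turns the sum into one already covered by the previous lemma, so that the qpr hypothesis is used only to produce the regular summand $\cR$ (the operator range condition on $\cS$ itself plays no role in this particular statement).
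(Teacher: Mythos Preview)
Your proposal is correct and follows essentially the same route as the paper: decompose $\cS = \cR \oplus \cN$, absorb $\cN$ into $\cS^\bot$ to rewrite $\cS + \cS^\bot = \cR + \cS^\bot$, then apply Lemma~\ref{lem:M_closed_then_R_plus_M_closed} with $\cM = \cS^\bot$. The only difference is that you supply a bit more justification for why $\cN \subseteq \cS^\bot$ and why $\cS^\bot$ is closed, whereas the paper leaves these implicit.
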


\begin{proof}
  Write $\cS = \cR\oplus\cN$, where $\cR$ is regular and
  $\cN = \cS^0 = \cS \cap \cS^\bot$ is neutral.  Since
  $\cS^\bot \subseteq \cR^\bot$ and $\cS^\bot$ is closed, it follows
  by Lemma~\ref{lem:M_closed_then_R_plus_M_closed} that
  \begin{equation*}
    \cS + \cS^\bot = \cR \oplus \cS^\bot
  \end{equation*}
  is closed.
\end{proof}

If $\cR$ and $\cS$ are orthogonal subspaces of a \krein space $\cH$, then the sum
$\cR + \cS^0$ is automatically orthogonal since
$\cS^0 \subseteq \cS$, though it may not be direct, even if $\cS$
is qpr.  However, if $\cS$ is qpr and the sum \textit{is} direct, the
proof of the next proposition shows that $\cR$ is a \krein space.  A
subtle point is that this may not be enough to ensure that $\cR$ is
regular, since while it is closed in the induced inner product, it may
not be closed as a subspace of $\cH$.  As the next example indicates,
something more is needed to guarantee regularity.

\begin{example}\label{exmpl:R_not_regular_in_S}
  Let $\cH$ be an infinite dimensional Hilbert space, $x,y \in \cH$,
  $\|x\| = \|y\| = 1$ and $\ip{x}{y} = 0$.  Set
  $\cH_0 = \cH \ominus \,\mathrm{span}\,\{x,y\}$,
  $\cH_1 = \cH \ominus \,\mathrm{span}\,\{y\}$.  Define a fundamental
  symmetry on $\cH$ by $J|\cH_0 = 1_{\cH_0}$ and $Jx=y$ (so $Jy = x$).
  Then $\cN := \mathrm{span}\,\{x\}$ is a neutral subspace both \krein
  and Hilbert space orthogonal to $\cH_0$.  It is a standard result
  that there exists an unbounded everywhere defined linear functional
  $\varphi$ on $\cH_1$ with $\varphi(x) = 1$.  Furthermore,
  $\cR := \ker\varphi$ is not closed and $\cR \oplusdot \cN = \cH_1$,
  which is closed.  Hence $\cH_1$ is pseudo-regular since
  $\cH_1^\bot = \cH_1^0 = \cN$.  While the space $\cR$ is dense in
  $\cH_1$, it is not an operator range, for if it were, as will be
  shown below in
  Theorem~\ref{thm-S_qpr_iff_op_range_and_SplusSperp_closed}, it would
  be regular, and so in particular closed, in which case it would
  equal $\cH_1$.

  By the way, another, more concrete, example of a subspace which is
  not an operator range will be given in
  Example~\ref{exmpl:minimal_or_not}.
\end{example}

\begin{proposition}
  \label{prop:R_oplus_N_qpr_then_R_regular}
  Let $\cS$ be a qpr subspace of a \krein space $\cH$.  If $\cR$ is an
  operator range and $\cS = \cR \oplusdot \cS^0$, then $\cR$ is
  regular.
\end{proposition}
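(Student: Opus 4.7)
The plan is to produce an explicit isomorphism of \krein spaces $\phi\colon \cR \to \cR'$, where $\cS = \cR' \oplusdot \cS^0$ is the qpr decomposition of $\cS$ guaranteed by Definition~\ref{defpseudoreg} and Lemma~\ref{lem:Lemma0} (with $\cR'$ regular). Specifically, let $P'$ denote the selfadjoint projection of $\cH$ onto $\cR'$. Since $\cS^0 \subseteq \cS^\bot \subseteq (\cR')^\bot$, $P'$ annihilates $\cS^0$, so $\phi := P'|_\cR$ maps $\cR$ into $\cR'$. Comparing the two decompositions $\cS = \cR \oplusdot \cS^0 = \cR' \oplusdot \cS^0$ shows directly that $\phi$ is a bijection: surjectivity follows because every $r' \in \cR' \subseteq \cS$ writes as $r + n$ with $r \in \cR$, $n \in \cS^0$, whence $\phi(r) = r'$; injectivity follows because if $\phi(r) = 0$ for $r = r'' + n \in \cR$ with $r'' \in \cR'$, $n \in \cS^0$, then $r'' = 0$, forcing $r \in \cR \cap \cS^0 = \{0\}$.

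Next I would verify that $\phi$ preserves the indefinite inner product: for $r_1, r_2 \in \cR$, write $r_i = \phi(r_i) + n_i$ with $n_i \in \cS^0$; the cross terms $\ip{\phi(r_i)}{n_j}$ vanish because $\cS^0 \perp \cR'$, and $\ip{n_1}{n_2} = 0$ by neutrality of $\cS^0$, so $\ip{r_1}{r_2} = \ip{\phi(r_1)}{\phi(r_2)}$.

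The crux of the argument — and the place where the operator range hypothesis is used in an essential way — is boundedness of $\phi^{-1}$. Writing $\cR = \ran A$ for some bounded $A\colon \cK \to \cH$ with $\cK$ Hilbert, the composition $P'A\colon \cK \to \cR'$ is a bounded linear surjection onto the Banach space $\cR'$. The open mapping theorem provides a constant $C > 0$ such that each $r' \in \cR'$ admits $k \in \cK$ with $P'Ak = r'$ and $\|k\| \le C\|r'\|$; then $\phi^{-1}(r') = Ak$, so $\|\phi^{-1}(r')\| \le C\|A\|\,\|r'\|$. Since $\phi = P'|_\cR$ is automatically continuous, $\phi$ is a bicontinuous bijection between $\cR$ (in the subspace topology from $\cH$) and the complete space $\cR'$. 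Consequently $\cR$ is complete, hence closed in $\cH$, and the isometry transfers the \krein structure from $\cR'$ to $\cR$, making $\cR$ regular.

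I expect the step involving the operator range hypothesis to be the main obstacle; Example~\ref{exmpl:R_not_regular_in_S} already demonstrates that without this hypothesis $\cR$ may fail to be closed, so any proof must invoke the operator range structure of $\cR$ in an essential way, and the open mapping argument applied to $P'A$ accomplishes this cleanly.
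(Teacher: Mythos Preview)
Your proof is correct and shares the paper's architecture: both restrict the selfadjoint projection onto the regular summand $\cR'$ (the paper's $\tilde\cR$) to obtain an inner-product-preserving bijection $\phi\colon\cR\to\cR'$, thereby transferring the \krein structure to~$\cR$, and both must then separately establish that $\cR$ is closed in~$\cH$. The difference lies only in this closedness step. The paper first observes that $\cR \pdot \cS^\bot = \cS + \cS^\bot$ is closed (Lemma~\ref{lem:S+Sperp_closed}) and then invokes \cite[Theorem~2.3]{FW1971}, which says that an operator range forming a closed direct sum with a closed subspace is itself closed. Your open-mapping argument applied to the surjection $P'A\colon\cK\to\cR'$ is more self-contained: it bypasses both Lemma~\ref{lem:S+Sperp_closed} and the Fillmore--Williams citation, deducing boundedness of $\phi^{-1}$ (and hence completeness of~$\cR$) directly. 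Both routes are valid; yours is slightly more elementary, while the paper's has the side benefit of isolating Lemma~\ref{lem:S+Sperp_closed} as an independently useful fact.
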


\begin{proof}
  Since $\cS$ is qpr, there is a regular subspace $\tilde{\cR}$ such
  that $\cS = \tilde{\cR} \oplus \cS^0$.  Choose a fundamental
  symmetry $J$ fixing $\tilde{\cR}$ and write $|\cH|$ for the
  associated Hilbert space.

  Let $P$ be the (both Hilbert and \krein space) orthogonal projection
  onto $\tilde{\cR}$.  Then
  $P(\cR) = P(\cR \pdot \cS^0) = P(S) = \tilde{\cR}$.  If $x\in \cR$,
  then $x = \tilde{x} + n$, $\tilde{x} \in \tilde{\cR}$,
  $n \in \cS^0$, so $P(x) = \tilde{x}$.  Hence if $x\in \ker P$, then
  $x\in \cS^0$.  Thus since it is assumed that the sum of $\cR$ and
  $\cS^0$ is direct, $P:\cR \to \tilde{\cR}$ is a bijection.

  Decompose $\tilde{\cR} = \tilde{\cR}^+ \pdot \tilde{\cR}^-$, where
  $\tilde{\cR}^+$ is closed and uniformly positive and $\tilde{\cR}^-$
  is closed and uniformly negative.  Define
  \begin{equation*}
    \cR^\pm = \left\{ x\in \cR : P(x) \in \tilde{\cR}^\pm \right\}.
  \end{equation*}
  If $x\in \cR^\pm$, then $x = \tilde{x} +n$,
  $\tilde{x} \in \tilde{\cR}^\pm$, $n\in \cS^0$, and so
  \begin{equation*}
    \ip{x}{x} = \ip{\tilde{x} +n}{\tilde{x} +n} =
    \ip{\tilde{x}}{\tilde{x}} = \ip{Px}{Px}.
  \end{equation*}
  Hence $P|_{\cR^\pm} : \cR^\pm \to \tilde{\cR}^\pm$ are isometric
  isomorphisms.  Then since $\tilde{\cR}^+$ and $|\tilde{\cR}^-|$ are
  Hilbert spaces, the same is true for $\cR^+$ and $|\cR^-|$.  Similar
  reasoning shows that $\cR^+$ and $\cR^-$ are (\krein space)
  orthogonal, and so $\cR = \cR^+ \pdot \cR^-$ is a \krein space.

  Under the assumption that $\cS$ is qpr,
  Lemma~\ref{lem:S+Sperp_closed} implies that
  $\cR \pdot \cS^\bot = \cS + \cS^\bot$ is closed.  By
  \cite[Theorem~2.3]{FW1971}, $\cR$ an operator range leads to the
  conclusion that $\cR$ is closed in $\cH$, so as desired, $\cR$ is
  regular.
\end{proof}

If a space $\cS$ is pseudo-regular, then $\cS$ is closed by
definition, and by Lemma~\ref{lem:S+Sperp_closed} $\cS + \cS^\bot$ is
closed.  Gheondea proved that these two conditions characterize
pseudo-regularity~\cite{Gheondea1984}.  It is natural to wonder if
there is a similar characterization for qpr spaces.
Lemma~\ref{lem:S+Sperp_closed} says that if $\cS$ is qpr, then
$\cS + \cS^\bot$ is closed.  Also, it is clear from
Lemma~\ref{lem:Lemma0} that the closure of a qpr space is
pseudo-regular.  So this combined with Gheondea's result might lead
one to guess that $\cS + \cS^\bot$ being closed implies that $\cS$ is
qpr. The following example rules this out.

\begin{example}\label{exmpl:S+Sperp_closed_but_not_qpr}
  There is no converse to Lemma~\ref{lem:S+Sperp_closed}; that is,
  simply assuming that $\cS+\cS^\perp$ is closed is not enough to
  guarantee that a space $\cS$ is qpr.  To see this, using the spaces
  constructed in Example~\ref{exmpl:R_not_regular_in_S}, let
  $\cS = \cR$.  As noted there, $\cS^\bot = \cH_1^\bot$, and so
  $\cS + \cS^\bot = \cH_1$, which is closed.  Also, $\cS^0 = \{0\}$
  since $\cS \cap \cN = \{0\}$.  So if $\cS$ were qpr, it would
  necessarily be regular and so closed, giving a contradiction.
\end{example}

While it was seen in the last example that the assumption that
$\cS+\cS^\perp$ closed does not characterize quasi-pseudo-regularity,
the space $\cS+\cS^\perp$ does have some nice properties.

\begin{lemma}
  \label{lem:T_qpr_then_others_pr}
  If $\cS$ is qpr, then $\overline{\cS}$, $\cS^\perp$, and
  $\cS + \cS^\perp$ are pseudo-regular.  Also, if $\cS^\perp$ is
  pseudo-regular and $\cS$ is closed, then $\cS$ is pseudo-regular.
\end{lemma}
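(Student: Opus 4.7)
The plan is to reduce every assertion to the combination of Lemma~\ref{lem:S+Sperp_closed}, which already provides that $\cS+\cS^\perp$ is closed, with Gheondea's characterization~\cite{Gheondea1984} recalled in the discussion preceding Example~\ref{exmpl:S+Sperp_closed_but_not_qpr}: a closed subspace $\cT$ of a \krein space is pseudo-regular if and only if $\cT+\cT^\perp$ is closed.

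The whole of the work is concentrated in a single identity, namely $\overline{\cS}+\cS^\perp=\cS+\cS^\perp$. Write $\cS=\cR\oplus\cN$ with $\cR$ regular and $\cN=\cS^0$ neutral. By Lemma~\ref{lem:Lemma0}, $\overline{\cN}=\overline{\cS^0}=\overline{\cS}^0$, and since $\cN\subseteq\cS^\perp$ with $\cS^\perp$ closed, $\overline{\cN}\subseteq\cS^\perp$. A short approximation argument approximating elements of $\overline{\cN}$ by elements of $\cN$ identifies $\overline{\cS}$ with $\cR+\overline{\cN}$; this sum is closed in the first place by Lemma~\ref{lem:M_closed_then_R_plus_M_closed}. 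Then
\begin{equation*}
\overline{\cS}+\cS^\perp=\cR+\overline{\cN}+\cS^\perp=\cR+\cS^\perp=\cS+\cS^\perp,
\end{equation*}
using $\overline{\cN}\subseteq\cS^\perp$ in the middle equality and $\cN\subseteq\cS^\perp$ in the last.

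Granted this identity, the three pseudo-regularity statements follow formally from Gheondea's criterion. The space $\overline{\cS}$ is closed, and $\overline{\cS}+\overline{\cS}^\perp=\overline{\cS}+\cS^\perp$ is closed. The space $\cS^\perp$ is closed, and $\cS^\perp+(\cS^\perp)^\perp=\cS^\perp+\overline{\cS}$ is closed by the same identity. The space $\cS+\cS^\perp$ is closed by Lemma~\ref{lem:S+Sperp_closed}, while its orthogonal complement is $\cS^\perp\cap\cS^{\perp\perp}=\cS^\perp\cap\overline{\cS}=\overline{\cS}^0=\overline{\cN}\subseteq\cS^\perp$, so $(\cS+\cS^\perp)+(\cS+\cS^\perp)^\perp=\cS+\cS^\perp$ is closed. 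For the final assertion, if $\cS^\perp$ is pseudo-regular then Gheondea gives $\cS^\perp+\overline{\cS}$ closed; if additionally $\cS$ is closed this reduces to $\cS+\cS^\perp$ being closed, and one more application of the characterization yields that $\cS$ is pseudo-regular.

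The main step, and the only nonformal point, is the preliminary identity $\overline{\cS}+\cS^\perp=\cS+\cS^\perp$; everything else is bookkeeping. The only delicate ingredient within the identity is the containment $\overline{\cN}\subseteq\cS^\perp$, which however is immediate from the computation $\overline{\cN}=\overline{\cS}^0\subseteq\cS^\perp$ supplied by Lemma~\ref{lem:Lemma0}.
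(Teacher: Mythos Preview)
Your proof is correct and follows exactly the alternative route the paper itself flags immediately after its own argument: ``Lemma~\ref{lem:T_qpr_then_others_pr} could have been proved using Gheondea's characterization of pseudo-regular subspaces cited above.'' Every step checks out, including the key identity $\overline{\cS}+\cS^\perp=\cS+\cS^\perp$ (which you derive cleanly from $\overline{\cS}=\cR+\overline{\cN}$ and $\overline{\cN}\subseteq\cS^\perp$), and the final sentence handles the converse direction correctly.

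The difference is one of explicitness. The paper constructs, by hand, a regular complement $\tilde\cR$ of $\cN$ inside $\cS^\perp$: it fixes a fundamental symmetry $J$ with $J\cR=\cR$, writes $\cH=(\overline{\cS}+\cS^\perp)\oplus_{|\cH|}J\cN$, and then peels off $\tilde\cR$ so that $\cS^\perp=\tilde\cR\oplus\cN$ and $\cS+\cS^\perp=(\cR+\tilde\cR)\oplus\cN$. This yields the pseudo-regular decompositions directly, without invoking Gheondea's criterion, and in the process records the concrete regular parts of $\cS^\perp$ and $\cS+\cS^\perp$. Your route is shorter and more conceptual, treating Gheondea's characterization as a black box so that only closedness of the relevant sums needs to be verified; it does not, however, exhibit $\tilde\cR$ explicitly. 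For the purposes of the lemma either argument suffices.
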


\begin{proof}
  If $\cS$ is qpr, then $\cS = \cR \oplus \cS^0$, where $\cR$ is
  regular and $\cS^0$ is neutral.  By regularity of $\cR$, there is a
  fundamental symmetry $J$ such that $J\cR = \cR$, and so $\cR$ and
  $\cS^0$ are also orthogonal in the associated Hilbert space $|\cH|$.
  Notice that $\overline{\cS} = \cR \oplus \cN$,
  $\cN = \overline{\cS^0} = \overline{\cS} \cap \cS^\perp$ by
  Lemma~\ref{lem:Lemma0}.  As $\cN$ is closed and neutral,
  $\overline{\cS}$ is pseudo-regular.  By the choice of $J$,
  $\overline{\cS} = \cR \oplus_{|\cH|} \cN$.  It follows from
  Lemma~\ref{lem:S+Sperp_closed} that $\overline{\cS} + \cS^\perp$ is
  closed.  Since
  $(\overline{\cS} + \cS^\perp)^\perp = \cS^\perp\cap \overline{\cS}=
  \cN$,
  \begin{equation*}
    \cH = (\overline{\cS} + \cS^\perp) \oplus_{|\cH|} J\cN
    =\cR \oplus_{|\cH|} \cN \oplus_{|\cH|} \tilde{\cR}
    \oplus_{|\cH|} J\cN,
  \end{equation*}
  where $\cS^\bot = \cN \oplus_{|\cH|} \tilde\cR$, $\tilde\cR$ is
  closed.  So for $\cL := \cN \oplus_{|\cH|} J\cN$, which is closed,
  $J\cL = \cL$, and so $\cL$ is regular.  This then implies that
  \begin{equation}\label{sep3a2025}
    \cH = (\cR +\tilde\cR)  \oplus_{|\cH|} \cL
    = (\cR +\tilde\cR)\,  \dot{\oplus}_{\cH} \,\cL ,
  \end{equation}
  and regularity of $\cL$ leads to the conclusion that
  $\cR +\tilde\cR$ is also regular.  The sum $\cR +\tilde\cR$ is
  direct, and it is also \krein space orthogonal because
  $\tilde\cR\subseteq\cS^\perp$ and $\cS^\perp\subseteq\cR^\perp$
  (recall $\cR\subseteq\cS$).  Then since $\cR$ is regular, so is
  $\tilde\cR$.

  The last equality in \eqref{sep3a2025} shows that $\cR +\tilde\cR$
  is \krein space orthogonal to $\cL$ and hence to $\cN$.  Thus
  $\cS^\perp = \tilde\cR \oplus \cN$, and since $\cS^\perp$ is closed,
  it is pseudo-regular by Definition \ref{defpseudoreg}.  The equality
  $\cS+\cS^\perp = (\cR +\tilde\cR)\oplus\cN$ similarly shows that
  $\cS+\cS^\perp$ is pseudo-regular since it is closed by Lemma
  \ref{lem:S+Sperp_closed}.  The last statement of the lemma is
  immediate from what has already been shown.
\end{proof}

\begin{corollary}
  \label{cor:SplusSperp_closed_is_pr}
  If $\cS$ is any subspace, then $\overline{\cS+\cS^\perp}$ is
  pseudo-regular.  So if $\cS+\cS^\perp$ is closed, it is
  pseudo-regular, and if the sum is also direct, then it is regular.
\end{corollary}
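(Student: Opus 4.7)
The plan is to apply Gheondea's characterization of pseudo-regularity (recorded in the paragraph just before Example~\ref{exmpl:S+Sperp_closed_but_not_qpr}): a closed subspace $\cT$ is pseudo-regular if and only if $\cT + \cT^\perp$ is closed. Setting $\cT := \overline{\cS+\cS^\perp}$, which is closed by construction, the first assertion reduces to checking this criterion. The key computation, using the Krein identity $\cS^{\perp\perp}=\overline{\cS}$, is
\begin{equation*}
  \cT^\perp = (\cS+\cS^\perp)^\perp = \cS^\perp\cap\cS^{\perp\perp}
  = \cS^\perp\cap\overline{\cS} = \overline{\cS}^0,
\end{equation*}
combined with the chain of containments $\cT^\perp \subseteq \cS^\perp \subseteq \cS + \cS^\perp \subseteq \cT$, from which $\cT + \cT^\perp = \cT$ is trivially closed. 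Hence $\cT$ is pseudo-regular.

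The second assertion is then immediate: if $\cS + \cS^\perp$ is itself closed, then it coincides with $\cT$ and is pseudo-regular. For the third assertion, assume in addition $\cS \cap \cS^\perp = \{0\}$. Since $\cT^0 = \cT\cap\cT^\perp = \overline{\cS}^0$, regularity of $\cT$ is equivalent to $\overline{\cS}^0 = \{0\}$. Given $n \in \overline{\cS}\cap\cS^\perp \subseteq \cT$, the unique direct decomposition $n = s + t$ forces $s = n - t \in \cS\cap\cS^\perp = \{0\}$, so $n = t \in \cS^\perp$. The plan is then to combine this with an approximating sequence $s_k\to n$ drawn from $\cS$ and the bounded projection $E\colon\cT \to \cT^0$ afforded by the pseudo-regular structure, arguing that $E(s_k)\to n$ while directness together with the inclusion $\cT^0 \subseteq \cS^\perp$ obstructs $E(s_k)$ from producing any nonzero limit.

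The hardest part will be making this last step rigorous: the algebraic directness of $\cS + \cS^\perp$ does not automatically upgrade to a topological direct sum decomposition, and $E(s_k)$ need not lie in $\cS$. The natural bridge is the closed graph theorem applied to the algebraic projection $\cT\to\cS^\perp$ along $\cS$, whose continuity is equivalent to the triviality of $\overline{\cS}^0$ one is trying to establish. Thus the third step must leverage the Banach structure of $\cT$ (from its closedness) together with the directness hypothesis and the closedness of $\cS^\perp$ to force this projection to be bounded, which in turn will trap the approximating sequence and yield $n = 0$.
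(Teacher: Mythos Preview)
Your argument for the first two assertions is correct and more self-contained than the paper's: you verify Gheondea's criterion directly by computing $\cT^\perp = \overline{\cS}\cap\cS^\perp \subseteq \cT$, so that $\cT+\cT^\perp=\cT$ is trivially closed. The paper instead writes $\cT = (\overline{\cS}\cap\cS^\perp)^\perp$ and observes that the orthogonal complement of a closed neutral (hence pseudo-regular) subspace is pseudo-regular by Lemma~\ref{lem:T_qpr_then_others_pr}. Both routes are short; yours avoids the dependence on that lemma.

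For the third assertion your plan has a genuine gap, and it cannot be repaired. The closed-graph step you outline is circular, as you yourself flag: boundedness of the algebraic projection $\cT\to\cS^\perp$ along $\cS$ would force $\cS$ to be closed in $\cT$, which is equivalent to $\overline{\cS}^0=\{0\}$, the very conclusion sought. In fact the assertion fails for arbitrary~$\cS$. Take $\cS=\cR$ from Example~\ref{exmpl:R_not_regular_in_S} (as in Example~\ref{exmpl:S+Sperp_closed_but_not_qpr}): there $\cS^\perp=\cN$, $\cS\cap\cS^\perp=\{0\}$, and $\cS+\cS^\perp=\cH_1$ is closed, yet $\cH_1^0=\cN\neq\{0\}$, so $\cH_1$ is pseudo-regular but not regular. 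The paper's proof of this step appeals to Lemma~\ref{lem:Lemma0} for the identity $\overline{\cS}^0=\overline{\cS^0}$, but that lemma carries the hypothesis $\cS=\cR\oplus\cN$ with $\cR$ regular, which an arbitrary subspace need not satisfy and which the example just cited violates.
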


\begin{proof}
  Suppose that $\cS$ is an arbitrary subspace of a \krein space $\cH$.
  Then since
  \begin{equation*}
    \overline{\cS+\cS^\perp} = \overline{\overline{\cS}+\cS^\perp} =
    (\overline{\cS}\cap\cS^\perp)^\perp,
  \end{equation*}
  and $\cN := \overline{\cS}\cap\cS^\perp$ is a closed neutral
  subspace (so pseudo-regular), it follows that
  $\cN^\perp = \overline{\cS+\cS^\perp}$ is pseudo-regular.  Hence if
  $\cS+\cS^\perp$ is closed, it is pseudo-regular.  If the sum is
  direct, then
  $\overline{\cS^0} = \overline{\cS\cap\cS^\perp} = \{0\}$.  Then by
  Lemma~\ref{lem:Lemma0}, $\cN = \overline{\cS}^0 = \{0\}$.  Hence
  $\cS+\cS^\perp$ is regular.
\end{proof}

The next result is Proposition 3.2.9 in~\cite{Gheondea2022}.  The
proof given here is somewhat different and uses the previous
corollary.

\begin{corollary}
  \label{cor:closed_subsp_in_Pontr_sp_ps-reg}
  Let $\cH$ be a Pontryagin space and $\cS \subseteq \cH$ a closed
  subspace.  Then $\cS$ is pseudo-regular.  Conversely, if $\cH$ is a
  \krein space for which every closed subspace is pseudo-regular, then
  $\cH$ is a Pontryagin space.
\end{corollary}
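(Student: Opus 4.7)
My plan is to handle the two implications separately, using the finite rank of $P_-^\cH$ (or $P_+^\cH$) as the key structural hypothesis in one direction and producing an explicit counterexample in the other.

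\emph{Forward direction (Pontryagin $\Rightarrow$ every closed subspace is pseudo-regular).}  Say $\kappa := \dim \cH_- < \infty$, and fix a fundamental symmetry $J$ with associated Hilbert inner product $[\cdot,\cdot]$.  For closed $\cS \subseteq \cH$, the isotropic part $\cS^0 := \cS \cap \cS^\perp$ is closed and neutral, so $\dim \cS^0 \le \kappa < \infty$.  Because $\cS^0$ is finite-dimensional and closed in the Banach space $\cS$, it admits a closed algebraic complement $\cR$ in $\cS$, giving $\cS = \cR \pdot \cS^0$, and the sum is orthogonal since $\cS^0 \subseteq \cS^\perp$.  A short argument shows $\cR \cap \cR^\perp = \{0\}$: any $x$ in this intersection lies in $\cS$ and annihilates both $\cR$ and $\cS^0$, hence $x \in \cS \cap \cS^\perp = \cS^0$, forcing $x = 0$.

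The main step is to promote this non-degeneracy to regularity.  On the Hilbert space $(\cR, [\cdot,\cdot])$ the Krein form is $\ip{x}{y} = [Ax, y]$ where $A := P_\cR J P_\cR|_\cR$ is self-adjoint and $P_\cR$ is the Hilbert projection onto $\cR$.  Writing $J = I - 2 P_-^\cH$ with $P_-^\cH$ of rank $\kappa$, one sees $A = I_\cR - 2 P_\cR P_-^\cH P_\cR$ is a finite-rank self-adjoint perturbation of the identity on $\cR$, so $\sigma(A)$ is a finite subset of $\mathbb{R}$.  Non-degeneracy of $\cR$ makes $A$ injective, hence $0 \notin \sigma(A)$ and $A$ is boundedly invertible.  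The spectral decomposition of $A$ then splits $\cR$ into Hilbert-orthogonal summands on which $A$ is uniformly positive and uniformly negative, and these summands are also Krein-orthogonal and uniformly definite in $\ip{\cdot}{\cdot}$, exhibiting $\cR$ as a Krein space, hence regular.  Then $\cS = \cR \oplus \cS^0$ verifies Definition~\ref{defpseudoreg}.

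\emph{Converse, by contrapositive.}  Suppose $\cH$ is not Pontryagin, so both $\cH_+$ and $\cH_-$ are infinite-dimensional.  Fix Hilbert-orthonormal sequences $\{e_n\} \subseteq \cH_+$, $\{f_n\} \subseteq \cH_-$, and numbers $\alpha_n \in (0,1)$ with $\alpha_n \to 1$, and set $h_n := e_n + \alpha_n f_n$ and $\cM := \bigvee_n \{h_n\}$.  The $h_n$ are Hilbert-orthogonal with $\|h_n\|^2 = 1 + \alpha_n^2 \in [1,2]$, and $\ip{h_m}{h_n} = \delta_{mn}(1 - \alpha_n^2)$, so $\cM$ is a closed strictly positive subspace with $\cM^0 = \{0\}$.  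The ratio $\ip{h_n}{h_n}/\|h_n\|^2 = (1-\alpha_n^2)/(1+\alpha_n^2) \to 0$ shows $\cM$ is not uniformly positive, hence cannot be regular; if $\cM$ were pseudo-regular it would equal its regular part $\cR$, giving a contradiction.

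\emph{Main obstacle.}  The decisive step is the passage from non-degeneracy of $\cR$ to its regularity via the finite-rank perturbation structure of $A$.  The Pontryagin hypothesis enters precisely here: the finite rank of $P_-^\cH$ forces $\sigma(A)$ to be finite, so injectivity upgrades to bounded invertibility.  This mechanism is exactly what fails in the converse construction, where the analogous operator on $\cM$ has spectrum accumulating at $0$.  The remaining steps --- producing a closed complement of a finite-dimensional subspace, the orthogonality bookkeeping, and reading off a fundamental decomposition of $\cR$ from the spectral decomposition of $A$ --- are routine.
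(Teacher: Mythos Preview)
Your proof is correct, and the converse is essentially the same construction as the paper's (yours is the diagonal instance $Te_n = \alpha_n f_n$ of the paper's abstract graph-of-a-strict-contraction argument).

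For the forward direction, however, your route is genuinely different from the paper's. The paper invokes Corollary~\ref{cor:SplusSperp_closed_is_pr} to write $\overline{\cS+\cS^\perp} = \cH' \oplusdot \cN$ with $\cH'$ regular (hence itself Pontryagin), then sets $\cM = \cS \cap \cH'$, checks that $\cM$ is closed and non-degenerate in $\cH'$, and appeals to the external fact (Gheondea, Prop.~3.1.8) that closed non-degenerate subspaces of Pontryagin spaces are regular. You bypass both Corollary~\ref{cor:SplusSperp_closed_is_pr} and the external reference: you take \emph{any} closed complement $\cR$ of the finite-dimensional $\cS^0$ in $\cS$, verify non-degeneracy directly, and then establish regularity by a self-contained spectral argument, observing that the Gram operator $A = I_\cR - 2P_\cR P_-^\cH P_\cR$ is a finite-rank self-adjoint perturbation of the identity, so injectivity forces $0 \notin \sigma(A)$ (Fredholm index zero) and the spectral projections of $A$ furnish a fundamental decomposition of $\cR$. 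Your argument is more elementary and fully self-contained; the paper's is shorter on the page because it outsources the hard step, but it also illustrates how Corollary~\ref{cor:SplusSperp_closed_is_pr} feeds into the theory. Both are sound.
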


As is seen in the proof that follows, neutral subspaces of Pontryagin
spaces are finite dimensional, and hence closed.  Consequently,
quasi-pseudo-regular subspaces are pseudo-regular in this context.

\begin{proof}[Proof of Corollary
  $\ref{cor:closed_subsp_in_Pontr_sp_ps-reg}$]
  Assume $\dim \cH^- < \infty$ (the case where $\dim \cH^+ < \infty$
  being handled identically).  Define $\cN = \cS \cap \cS^\bot$, a
  closed neutral subspace.  Observe that
  $(\overline{\cS + \cS^\bot})^0 := (\overline{\cS + \cS^\bot}) \cap
  (\overline{\cS + \cS^\bot})^\bot = \cN$.  By
  Corollary~\ref{cor:SplusSperp_closed_is_pr},
  $\overline{\cS + \cS^\bot} = \cH' \oplusdot \cN$, where $\cH'$ is
  regular.  Note that $\cH'$, as a regular subspace of a Pontryagin
  space, is a Pontryagin space.  Set $\cM = \cS \cap \cH'$ and
  $\cM' = \cS^\bot \cap \cH'$.  Both are closed, they are orthogonal,
  and $\cS + \cS^\bot = (\cM \oplusdot \cM') \oplusdot \cN$.  So
  $\overline{\cM \oplusdot \cM'} = \cH'$, and thus $\cM$ is
  non-degenerate in $\cH'$.  As a closed, non-degenerate subspace of
  the Pontryagin space $\cH'$, $\cM$ is regular (see, for
  example,~\cite[Prop.~3.1.8]{Gheondea2022}).  Thus $\cS$ is
  pseudo-regular.

  For the converse, assume that $\cH$ is a \krein space which is not a
  Pontryagin space, so $\dim \cH^\pm = \infty$.  Define a Hilbert
  space contraction $T \in \cL(\cS^+, \cS^-)$ with the property that
  $\|Tx\| < \|x\|$ for all $x \in \cS^+$ and $\|T\| = 1$.  By the
  Closed Graph Theorem, the graph of $T$ is a closed subspace which is
  strictly positive, but not uniformly positive (so not regular).
\end{proof}

Lemma \ref{lem:T_qpr_then_others_pr} could have been proved using
Gheondea's characterization of pseudo-regular subspaces cited above.
The lemma indicates that pseudo-regular spaces have analogous
properties to regular spaces.  For example, $\cS$ is (pseudo-)regular
if and only if $\cS^\perp$ is (pseudo-)regular, while if $\cS$ is
\mbox{(pseudo-)}regular, then $\cS + \cS^\perp$ is (pseudo-)regular.

By both Lemma~\ref{lem:S+Sperp_closed} and Lemma~\ref{lem:Lemma0}, if
$\cS$ is qpr, then $\cS + \cS^\bot$ is closed and
$\overline{\cS^0} = {\overline{\cS}}^0$.  So one might suspect that
these conditions imply that $\cS$ is qpr.  The next example shows that
this may fail.

\begin{example}
  \label{exmpl:S+Sperp_closed+S_0_dense_but_not_qpr}
  Let $\cH_0$ be an infinite dimensional Hilbert space,
  $\cH = \cH_0 \oplus \cH_0 \oplus \cH_0$.  Make $\cH$ in to a \krein
  space by defining a fundamental symmetry
  \begin{equation*}
    J =
    \begin{pmatrix}
      1_{\cH_0} & 0 & 0 \\
      0 & 0 & 1_{\cH_0} \\
      0 & 1_{\cH_0} & 0
    \end{pmatrix}.
  \end{equation*}
  Let $\cR = \cH_0 \oplus \{0\} \oplus \{0\}$ and
  $\cN = \{0\} \pdot \cH_0 \oplus \{0\}$.  Fix $x \in \cN$,
  $\|x\| = 1$, and set $\cN_0 = \mathrm{span}\,\{x\}$.  The space
  $\cR$ is a Hilbert space (so regular) while $\cN$ is isotropic, so
  $\cN^\bot = \cR \pdot \cN$.  Note too that
  $\cN_0^\bot = \cR \pdot \cN$.  As in the last example, define an
  unbounded linear functional $\varphi$, this time on
  $\cR \pdot \cN_0$, with $\varphi(x) = 1$.  Let $\cS = \ker \varphi$.
  Then $\cH_1 := \cR \pdot \cN_0 = \cS \pdot \cN_0$ is closed and
  $\cS$ is dense in $\cR \pdot \cN_0$.

  Define a bounded operator acting on $\cN$ with dense range $\cW$ not
  containing $x$.  Then ${\cW}^\bot = \cR \pdot \cN$ (as a subspace of
  $\cH$).  Let $\tilde{\cS} = \cS + \cW$.  Then
  \begin{equation*}
    \begin{split}
      {\tilde{\cS}}^\bot
      &= \cS^\bot \cap \cW^\bot\\
      &= (\cR \pdot \cN_0)^\bot \cap (\cR \pdot \cN) \\
      &= \cR^\bot \cap \cN_0^\bot \cap (\cR \pdot \cN) \\
      &= (\cN + J\cN) \cap \cN_0^\bot \cap (\cR \pdot \cN) \\
      &= \cN \cap (\cR \pdot \cN) \\
      & = \cN,
    \end{split}
  \end{equation*}
  which contains $\cN_0$ since $\cW \subseteq \cN$, so
  \begin{equation*}
    \tilde{\cS} + {\tilde{\cS}}^\bot  = (\cS + \cW) + \cN = \cS + \cN
    = \cS \pdot \cN_0 + \cN = \cR + \cN_0 + \cN = \cR \pdot \cN,
  \end{equation*}
  which is closed.  It is easy to see that $\tilde{\cS}^0 = \cW$, so
  ${\overline{\tilde{\cS}^0}} = \cN$.  Also it can be checked that
  ${\overline{\tilde{\cS}}}^0 = \cN$

  Suppose that $\tilde{\cS}$ is qpr.  Then
  $\tilde{\cS} = \tilde{\cR} \pdot \cW$, where $\tilde{\cR}$ is
  regular, and so closed.  By assumption, $\cW$ is an operator range,
  hence in this case $\tilde{\cS}$ is also an operator range, as is
  the closed space $\cH_1$.  By construction,
  $\cS \subseteq \tilde{\cS} \cap \cH_1$.  On the other hand, if
  $x\in \tilde{\cS} \cap \cH_1$, $x = s + n$, $s \in \cS$,
  $n \in \cN_0$.  Thus
  $x - s = n \in \cW \cap \cH_1 = \cW \cap \cN_1 = \{0\}$.  So $n = 0$
  and $x = s \in \cS$.  Hence $\tilde{\cS} \cap \cH_1 = \cS$.
  However, by \cite[Cor.~2,~p.~260]{FW1971}, the intersection of two
  operator ranges is an operator range, and by
  Example~\ref{exmpl:S+Sperp_closed_but_not_qpr} it was seen that
  $\cS$ is not an operator range.  Therefore $\tilde{\cS}$ is not an
  operator range.  Hence $\tilde{\cS}$ is not qpr.
\end{example}

The following is the proper generalization to qpr spaces of Gheondea's
characterization of pseudo-regular spaces, and also contains his
result since closed spaces are operator ranges.

\begin{theorem}
  \label{thm-S_qpr_iff_op_range_and_SplusSperp_closed}
  A subspace $\cS$ of a \krein space $\cH$ is qpr if and only if
  \begin{enumerate}
  \item[(1)] $\cS$ is an operator range, and
    
    \smallskip
  \item[(2)] $\cS+\cS^\perp$ is closed.
  \end{enumerate}
\end{theorem}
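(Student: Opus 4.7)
The $(\Rightarrow)$ direction is immediate from Definition~\ref{defpseudoreg} and Lemma~\ref{lem:S+Sperp_closed}. For $(\Leftarrow)$, the plan is to produce a regular subspace $\cR\subseteq\cS$ with $\cS=\cR\oplusdot\cS^0$; since $\cS^0=\cS\cap\cS^\perp$ is a neutral operator range by~\cite[Cor.~2,~p.~260]{FW1971}, this will establish that $\cS$ is qpr. Set $\cN_0 := \overline{\cS}\cap\cS^\perp$. Since $\overline{\cS}+\overline{\cS}^\perp = \cS+\cS^\perp$ is closed, the closed subspace $\overline{\cS}$ satisfies Gheondea's criterion and is pseudo-regular, so one may write $\overline{\cS}=\tilde{\cR}\oplusdot\cN_0$ with $\tilde{\cR}$ regular. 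Let $P\in\lh$ be the Krein-selfadjoint projection onto~$\tilde{\cR}$. Since $\tilde{\cR}\subseteq\overline{\cS}$, both $\cS^\perp$ and $\cN_0$ are Krein-orthogonal to $\tilde{\cR}$ and thus lie in $\ker P$, whence
\begin{equation*}
  \tilde{\cR} = P(\tilde{\cR}) \subseteq P(\cS+\cS^\perp) = P(\cS) \subseteq \tilde{\cR},
\end{equation*}
so $P(\cS)=\tilde{\cR}$, while $\ker(P|_\cS) = \cS\cap\cN_0 = \cS^0$.

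The main construction is as follows. Equip $\cS$ with its canonical operator range Hilbert space structure $\|\cdot\|_\cS$ coming from a Douglas factorization, so that the inclusion $(\cS,\|\cdot\|_\cS)\hookrightarrow|\cH|$ is continuous and $\cS^0$ is $\|\cdot\|_\cS$-closed. Define $\cR$ to be the $\|\cdot\|_\cS$-orthogonal complement of $\cS^0$ in $\cS$. Then $\cS=\cR\oplusdot\cS^0$ algebraically, and the sum is automatically Krein-orthogonal since $\cS^0\subseteq\cS^\perp$. The restriction $P|_\cR\colon\cR\to\tilde{\cR}$ is a bijection (injective from $\cR\cap\cS^0=\{0\}$, surjective from $P(\cR)=P(\cS)=\tilde{\cR}$). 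Writing $r_i = Pr_i + (r_i-Pr_i)$ for $r_1,r_2\in\cR$ with $r_i-Pr_i\in\cN_0$, the neutrality of $\cN_0$ together with $\tilde{\cR}\perp\cN_0$ kill all cross and $\cN_0$-terms, giving $\ip{r_1}{r_2} = \ip{Pr_1}{Pr_2}$, so $P|_\cR$ is a Krein isometry.

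The hard part will be proving $\cR$ is regular. The plan is to apply the open mapping theorem to $P|_\cR\colon(\cR,\|\cdot\|_\cS)\to\tilde{\cR}$, a continuous bijection between Banach spaces: it is then a topological isomorphism, forcing $\|\cdot\|_\cS$ and $\|\cdot\|_\cH$ to be equivalent on $\cR$. Hence $\cR$ is closed in $\cH$, and being Krein-isometric via $P$ to the regular (hence Krein) space $\tilde{\cR}$ it inherits a Krein space structure compatible with the $\cH$-topology. A closed subspace that is itself a Krein space in the induced inner product is regular, finishing the proof.
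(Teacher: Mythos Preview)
Your proof is correct but follows a genuinely different route from the paper's. You bootstrap from Gheondea's closed-subspace criterion applied to $\overline{\cS}$ to obtain a regular $\tilde{\cR}$ with $\overline{\cS}=\tilde{\cR}\oplusdot\cN_0$, then carve out $\cR$ as the $\|\cdot\|_\cS$-orthocomplement of $\cS^0$ inside the operator-range Hilbert space $(\cS,\|\cdot\|_\cS)$ and prove closedness of $\cR$ via the open mapping theorem on $P|_\cR\colon(\cR,\|\cdot\|_\cS)\to\tilde{\cR}$. The paper, by contrast, is self-contained and does not invoke Gheondea: it applies \cite[Th.~2.4]{FW1971} directly to the closed sum $\cS+\cS^\perp$ to split it as $\cM_1\pdot\cM_2$ with $\cM_1\subseteq\cS$ and $\cM_2\subseteq\cS^\perp$ closed, sets $\cR=\cM_1\ominus_{|\cH|}(\cM_1\cap\cS^\perp)$, builds a companion $\tilde\cR\subseteq\cS^\perp$, and shows $\cR\oplusdot\tilde\cR$ is regular via Corollary~\ref{cor:SplusSperp_closed_is_pr} and Proposition~\ref{prop:R_oplus_N_qpr_then_R_regular}, whence $\cR$ is regular as an orthogonal direct summand. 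Your argument is shorter and more transparent once Gheondea is granted, and your open-mapping step is a pleasant alternative to the \cite[Th.~2.3]{FW1971} step in Proposition~\ref{prop:R_oplus_N_qpr_then_R_regular}; the paper's argument buys an independent proof of Gheondea's criterion as the special case where $\cS$ is closed.
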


\begin{proof}
  If $\cS$ is qpr, the forward implication follows directly by the
  definition of quasi-pseudo-regularity and
  Lemma~\ref{lem:S+Sperp_closed}.

  Now assume $\cS \subseteq \cH$ is an operator range and that
  $\cS+\cS^\perp$ is closed.  By~\cite[Th.~2.4]{FW1971}, there exist
  closed subspaces $\cM_1 \subseteq \cS$, $\cM_2 \subset \cS^\bot$,
  such that
  \begin{equation*}
    \cS+\cS^\perp = \cM_1 \pdot \cM_2.
  \end{equation*}
  Hence $\cS^\perp = \cM_2 \pdot (\cM_1 \cap \cS^\perp)$.

  Fix a fundamental symmetry $J$ and associated Hilbert space $|\cH|$.
  Define on this Hilbert space, the closed space
  \begin{equation*}
    \cR := \cM_1 \ominus_{|\cH|} (\cM_1 \cap \cS^\perp).
  \end{equation*}
  Then $\cR \subseteq \cM_1 \subseteq \cS$,
  $\cM_1 = \cR \pdot (\cM_1 \cap \cS^\perp)$, and
  \begin{equation*}
    \cS+\cS^\perp = \cM_1 \pdot \cM_2 = \cR \pdot (\cM_1 \cap \cS^\perp)
    \pdot \cM_2 = \cR \pdot \cS^\perp,
  \end{equation*}
  where the latter sum is direct since
  $\cR \cap \cS^\perp = \cR \cap (\cM_1 \cap \cS^\perp) = \{0\}$.
  Hence
  \begin{equation*}
    \cS = \cS \cap (\cR \pdot \cS^\perp) = \cR \pdot \cS^0.
  \end{equation*}

  Let $Q$ be the (not necessarily orthogonal) projection on
  $\cS+\cS^\perp$ with range $\cS^\perp$ and kernel $\cR$.  This is
  bounded since $\cS+\cS^\perp = \cR \pdot \cS^\perp$.  As
  $\overline{\cS^0} \subseteq \cS^\perp$ is closed, by continuity, the
  pre-image $Q^{-1}(\overline{\cS^0}) = \overline{\cS^0} + \cR$ is
  closed and contains $\cS^0 \pdot \cR$.  Hence
  $Q^{-1}(\overline{\cS^0}) \supseteq \overline{\cS^0 \pdot \cR}$,
  while the reverse inclusion always holds.  Therefore,
  \begin{equation*}
    \overline{S} = \overline{\cR \pdot \cS^0} = \overline{\cS^0} \pdot
    \cR.
  \end{equation*}

  Arguing as above with $\overline{\cS}$ in place of $\cS$, one has
  \begin{equation*}
    \overline{\cS} = (\cR \pdot \cS^\perp) \cap \overline{\cS} = \cR +
    (\overline{\cS}\cap\cS^\perp) = \cR \pdot (\overline{S})^0.
  \end{equation*}
  Consequently, since $\overline{\cS^0} \subseteq \overline{S}^0$,
  \begin{equation*}
    \cN := \overline{\cS^0} = \overline{S}^0.
  \end{equation*}
  The space $\cN \subseteq \cS^\bot$, so the sum is both orthogonal
  and direct; that is,
  \begin{equation*}
    \overline{\cS} = \cR \pdot \cN.
  \end{equation*}
  
  By parallel arguments with
  $\tilde R = \cM_2 \ominus_{|\cH|} (\cM_2 \cap \overline{\cS})$, one
  has $\tilde R$ closed and
  \begin{equation*}
    \cS^\perp = \tilde{\cR} \pdot \cN.
  \end{equation*}
  Then
  \begin{equation*}
    \cS + \cS^\perp =  \overline{S} +\cS^\perp = (\cR+\tilde\cR) + \cN.
  \end{equation*}
  By Corollary \ref{cor:SplusSperp_closed_is_pr}, $\cS + \cS^\perp$ is
  qpr, and $\cR+\tilde\cR$ is an operator range because it is the sum
  of operator ranges.  Since $\cR$ and $\tilde\cR$ are orthogonal
  to~$\cN$, $\cR+\tilde\cR$ is orthogonal to~$\cN$.  If
  $\cN \ni n = r + r'$, with $r\in \cR$ and $r' \in \tilde{\cR}$, then
  $r = -r' + n \in \cS^\bot$.  As $\cR \cap \cS^\bot = \{0\}$,
  conclude that $r = 0$ and $r' = n$.  Also,
  $\tilde{\cR} \cap \cN = \{0\}$, so $r' = n = 0$, meaning that
  $(\cR +\tilde{\cR}) \cap \cN = \{0\}$.  Therefore by Proposition
  \ref{prop:R_oplus_N_qpr_then_R_regular}, $\cR+\tilde\cR$ is regular.
  
  The sum $\cR+\tilde\cR$ is also orthogonal and direct.  Since the
  sum is regular, it follows that $\cR$ and $\tilde\cR$ are regular.
  The decomposition $\cS = \cR \pdot \cS^0$ obtained above is also
  orthogonal because $\cR\subseteq\cS$ and $\cS^0\subseteq\cS^\perp$.
  Therefore $\cS$ is qpr.
\end{proof}

\begin{proposition}\label{oct28b2024}
  Let $\cS_1$ and $\cS_2$ be qpr subspaces of a \krein space such that
  $\cS_1\perp\cS_2$.  Then $\cS_1+\cS_2$ is qpr.
\end{proposition}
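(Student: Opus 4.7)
The plan is to verify directly both conditions of Definition~\ref{defpseudoreg} for $\cS := \cS_1 + \cS_2$. Condition~(1), that $\cS$ is an operator range, is immediate: each $\cS_i$ is an operator range by hypothesis, and the sum of two operator ranges is again an operator range by Theorem~2.2 of \cite{FW1971}. So the real work is in producing a decomposition $\cS = \cR \oplus \cN$ with $\cR$ regular and $\cN$ neutral.

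For condition~(2), I would decompose each summand as $\cS_i = \cR_i \oplus \cN_i$ with $\cR_i$ regular and $\cN_i = \cS_i^0$ neutral (Lemma~\ref{lem:Lemma0}). Since $\cR_i \subseteq \cS_i$ and $\cS_1 \perp \cS_2$, the regular subspaces $\cR_1$ and $\cR_2$ are mutually orthogonal, so by Theorem~\ref{finiteorthosums}, $\cR := \cR_1 \oplusdot \cR_2$ is regular. For each $i$, one has $\cN_i \subseteq \cS_i^\perp$ (because $\cN_i = \cS_i^0$) and $\cN_i \subseteq \cS_i \subseteq \cS_{3-i}^\perp$ (because $\cS_1 \perp \cS_2$), hence $\cN_i \subseteq \cS_1^\perp \cap \cS_2^\perp = \cS^\perp$. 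Setting $\cN := \cN_1 + \cN_2$, this forces $\cN \subseteq \cS \cap \cS^\perp = \cS^0$, and so $\cN$ is neutral.

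It remains to verify that $\cS = \cR \oplus \cN$: the identity $\cS = \cR + \cN$ is immediate from the two decompositions, while $\cR \perp \cN$ follows from $\cR \subseteq \cS$ and $\cN \subseteq \cS^\perp$. For directness, if $r \in \cR \cap \cN$, then applying the selfadjoint projection $E_\cR$ onto the regular space $\cR$ gives $r = E_\cR r = 0$, since $\cN \subseteq \cR^\perp$. This produces the required decomposition and confirms that $\cS$ is qpr by Definition~\ref{defpseudoreg}. The only substantive point in the argument is recognizing that the hypothesis $\cS_1 \perp \cS_2$ is exactly what forces $\cN_i \subseteq \cS_{3-i}^\perp$, which in turn makes $\cN_1 + \cN_2$ isotropic in $\cS$; the rest is bookkeeping together with the Fillmore--Williams stability of operator ranges under sums.
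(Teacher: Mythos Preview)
Your proof is correct and follows essentially the same approach as the paper: decompose each $\cS_i$ into its regular and neutral parts, note that the regular parts combine to a regular space (Theorem~\ref{finiteorthosums}) and the neutral parts to a neutral space orthogonal to it, and use nondegeneracy of $\cR$ for directness. Your explicit verification of the operator-range condition via \cite[Theorem~2.2]{FW1971} and your argument that $\cN \subseteq \cS^0$ are slightly more detailed than the paper's version, but the substance is the same.
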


\goodbreak

\begin{proof}
  Let $\cS_i=\cR_i\pdot\cN_i$, where $\cR_i$ is regular and $\cN_i$ is
  neutral, $i=1,2$.  Then $\cR_1,\cR_2,\cN_1,\cN_2$ are pairwise
  orthogonal.  Thus $\cS_1+\cS_2=\cR + \cN$, where $\cR=\cR_1+\cR_2$
  is regular, $\cN=\cN_1+\cN_2$ is neutral, and $\cR\perp\cN$.  If
  $x\in\cR\cap\cN$, then $x\in\cR$ and $x\perp\cR$; hence $x=0$ by the
  nondegeneracy of a regular subspace.  So $\cR\cap\cN = \{0\}$, and
  therefore $\cS_1+\cS_2=\cR\pdot\cN$ is qpr.
\end{proof}

Of course this last proposition can be extended to finite families of
qpr spaces.  The extension to infinite families is explored in
Theorem~\ref{qpr_sums}.

\section{Sums of operator ranges}
\label{sec:sums-operator-ranges}

Quasi-pseudo-regular spaces are characterized in terms of operator
ranges, and closed subspaces are obviously also operator ranges.  For
this reason, the study of orthogonal sums of regular, pseudo-regular,
and qpr spaces, leads naturally to the study of sums of operator
ranges.  Finite sums of operator ranges are operator
ranges~~\cite[Th.~2.2]{FW1971}.  The consideration of infinite sums of
such spaces is more subtle.  The main issues here are already present
for Hilbert spaces, but our applications involve \krein spaces.

Attention therefore turns to sums of operator ranges
$\cM_\lambda = \ran T_\lambda$ acting on copies of a fixed \krein
space $\cH$ indexed by $\Lambda$.  By Definition \ref{defbigoplus},
such a sum $\cM$ is an operator range such that
\begin{equation*}
  \mathrm{span}_\lambda\, \cM_\lambda \subseteq \cM \subseteq
  \sum^{\scalebox{0.5}{\text{MS}}}_\lambda \cM_\lambda.
\end{equation*}
In general this will not be unique (see Example
\ref{exmpl:minimal_or_not}).  One way of constructing such a $T$ is as
an operator from a Cartesian product space into $\cH$.  If $\cH$ is a
\krein space and $\Lambda$ is an index set, define the \krein space
$\cH^\Lambda$ as in \cite[Section 4]{Rovnyak2025}.  For any associated
Hilbert space $|\cH|$ for $\cH$, $|\cH|^\Lambda$ is an associated
Hilbert space for~$\cH^\Lambda$.

Our first condition for the existence of a sum is based on the Cauchy
criterion.  As in \cite{Rovnyak2021}, given a family
$\{a_\lambda\}_\linL$ of non-negative scalars, the notation
$\sum_\linL a_\lambda <\infty$ means that all finite sums from this
set have a uniform upper bound.  In this case, $\sum_\linL a_\lambda$
is Moore-Smith convergent, and its Moore-Smith sum (also denoted
$\sum_\linL a_\lambda$) is the supremum of all finite sums.  Moreover
the Cauchy criterion is satisfied, and only countably many summands
are non-zero (see Lemma~2.2 and Theorem~2.3 of \cite{Rovnyak2021}).

\begin{theorem}
  \label{thm:ctble_sum_op_ranges}
  Let $\{\cM_\lambda\}_{\linL}$ be a collection of operator ranges in
  a \krein space $\cH$, and let $\{T_\lambda\}_{\linL}$ be operators
  in $\lh$ with $\ran T_\lambda = \cM_\lambda$ for each $\linL$.  If
  $\sum_\linL \|T_\lambda\| < \infty$ relative to some and hence any
  norm for~$\cH$, then the formula
  \begin{equation}\label{aug20b}
    Tx = \sum^{\scalebox{0.5}{\text{MS}}}_{\linL} x_\lambda,
    \quad x=\{x_\lambda\}_\linL \in \cH^\Lambda,
  \end{equation}
  defines an operator in $\cL(\cH^\Lambda,\cH)$ whose range is a sum
  of $\{\cM_\lambda\}_{\linL}$.
\end{theorem}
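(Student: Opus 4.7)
The plan is to verify that the Moore-Smith series defining $Tx$ converges absolutely for every $x \in \cH^\Lambda$ (reading \eqref{aug20b} as $Tx = \sum^{\mathrm{MS}}_\linL T_\lambda x_\lambda$, the natural interpretation given that the operators $T_\lambda$ enter only through the hypothesis), that the resulting map is bounded and linear, and that its range meets the three requirements of Definition \ref{defbigoplus} for being a sum of the operator ranges $\{\cM_\lambda\}_\linL$.

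I would first fix a fundamental decomposition of $\cH$, with associated Hilbert space norm $\|\cdot\|$ on $\cH$ and the corresponding norm on $\cH^\Lambda$, so that $\|x_\lambda\| \le \|x\|$ in each coordinate. Since $\sum_\linL \|T_\lambda\| < \infty$ forces all but countably many $T_\lambda$ to vanish, the bound
\begin{equation*}
  \sum_\linL \|T_\lambda x_\lambda\| \le \Bigl(\sum_\linL \|T_\lambda\|\Bigr) \|x\|
\end{equation*}
gives absolute, hence unconditional, convergence of $\sum_\linL T_\lambda x_\lambda$ in the Banach space $\cH$. Concretely, for $\varepsilon > 0$, choosing a finite $F_0 \subseteq \Lambda$ with $\sum_{\lambda \notin F_0} \|T_\lambda\| < \varepsilon$ yields $\|\sum_{\lambda \in G} T_\lambda x_\lambda\| \le \varepsilon \|x\|$ for every finite $G \subseteq \Lambda \setminus F_0$, confirming the Cauchy criterion for Moore-Smith convergence. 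The same estimate gives $\|Tx\| \le (\sum_\linL \|T_\lambda\|)\|x\|$; linearity is immediate, and equivalence of any two associated Hilbert space norms on $\cH$ shows the construction is independent of the chosen decomposition. Hence $T \in \cL(\cH^\Lambda, \cH)$.

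To identify $\ran T$ as a sum of the $\cM_\lambda$, fix $\mu \in \Lambda$ and $y \in \cM_\mu$, choose $x_\mu \in \cH$ with $T_\mu x_\mu = y$, and set $x_\lambda = 0$ for $\lambda \ne \mu$; then $Tx = y$, giving $\cM_\mu \subseteq \ran T$ and, by linearity, $\mathrm{span}_\linL \cM_\lambda \subseteq \ran T$. Conversely, for any $x \in \cH^\Lambda$, $Tx$ is by construction a Moore-Smith sum of terms $T_\lambda x_\lambda \in \cM_\lambda$, so $\ran T \subseteq \sum^{\mathrm{MS}}_\linL \cM_\lambda$. As $\ran T$ is itself an operator range, the three conditions in \eqref{sum_def} all hold and $\ran T$ is a sum in the sense of Definition \ref{defbigoplus}. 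The main hurdle is the conceptual one of handling Moore-Smith convergence over a potentially uncountable index set, but this dissolves once one notes that $\sum_\linL \|T_\lambda\| < \infty$ forces countable support and supplies an absolute bound; the standard principle that absolute convergence implies unconditional convergence in a Banach space then closes the argument.
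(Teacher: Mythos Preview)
Your proof is correct and follows essentially the same route as the paper: fix an associated Hilbert space norm, use the bound $\|T_\lambda x_\lambda\|\le\|T_\lambda\|\,\|x\|$ together with the Cauchy criterion for Moore-Smith summability to obtain convergence and boundedness of $T$, and then read off that $\ran T$ is a sum of the $\cM_\lambda$. Your explicit verification of both inclusions in \eqref{sum_def} is slightly more detailed than the paper's (which simply declares this step immediate), and your reading of \eqref{aug20b} as $Tx=\sum^{\mathrm{MS}}_\linL T_\lambda x_\lambda$ is indeed the intended one.
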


When $\Lambda$ is countable, the condition
$\sum_\linL \|T_\lambda\| < \infty$ in Theorem
\ref{thm:ctble_sum_op_ranges} can always be arranged by scaling, that
is, by multiplying the operators $T_\lambda$ by positive constants.
Scaling does not change the relations $\ran T_\lambda = \cM_\lambda$,
$\linL$.

\begin{corollary} \label{thm:ctble_sum_op_ranges_cor} For every
  countable collection $\{\cM_\lambda\}_{\linL}$ of operator ranges in
  a \krein space, there is a bounded operator $T$ with range a sum of
  $\{\cM_\lambda\}_{\linL}$.
\end{corollary}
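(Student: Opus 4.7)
The plan is to reduce immediately to Theorem~\ref{thm:ctble_sum_op_ranges} by a scaling argument. Since $\Lambda$ is countable, enumerate it as $\Lambda = \{\lambda_1,\lambda_2,\dots\}$ (finite or infinite). For each $\lambda$, pick a bounded operator $S_\lambda \in \lh$ with $\ran S_\lambda = \cM_\lambda$; such operators exist by the definition of operator range.

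Next, I would choose positive scaling constants $c_n > 0$ so that the scaled norms are summable. A convenient choice is
\begin{equation*}
  c_n = \frac{1}{2^n(1 + \|S_{\lambda_n}\|)},
\end{equation*}
and set $T_{\lambda_n} = c_n S_{\lambda_n}$. Then $\|T_{\lambda_n}\| \le 2^{-n}$, so $\sum_{\linL} \|T_\lambda\| \le \sum_n 2^{-n} < \infty$ relative to any fixed norm on~$\cH$. Crucially, multiplying by a nonzero scalar preserves the range of a linear operator, so $\ran T_\lambda = \ran S_\lambda = \cM_\lambda$ for every $\linL$.

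With this scaling in place, the hypotheses of Theorem~\ref{thm:ctble_sum_op_ranges} are satisfied for the family $\{T_\lambda\}_{\linL}$. Applying the theorem, the formula
\begin{equation*}
  Tx = \sum^{\scalebox{0.5}{\text{MS}}}_{\linL} T_\lambda x_\lambda,
  \qquad x = \{x_\lambda\}_{\linL} \in \cH^\Lambda,
\end{equation*}
defines a bounded operator $T \in \cL(\cH^\Lambda,\cH)$ whose range is a sum of $\{\cM_\lambda\}_{\linL}$ in the sense of Definition~\ref{defbigoplus}. This yields the desired operator.

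I do not anticipate any real obstacle: the only subtlety is the trivial observation that scaling by nonzero constants preserves ranges, so the hypothesis of Theorem~\ref{thm:ctble_sum_op_ranges} can always be arranged when $\Lambda$ is countable. The argument would of course fail for uncountable $\Lambda$, since no choice of positive scalars $c_\lambda$ can make an uncountable sum $\sum_{\linL} c_\lambda \|S_\lambda\|$ finite unless all but countably many terms vanish; this explains the countability restriction in the statement.
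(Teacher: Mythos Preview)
Your proposal is correct and follows essentially the same approach as the paper: the paper observes that when $\Lambda$ is countable the summability condition $\sum_\linL \|T_\lambda\| < \infty$ of Theorem~\ref{thm:ctble_sum_op_ranges} can always be arranged by scaling each $T_\lambda$ by a positive constant, noting that scaling does not change $\ran T_\lambda = \cM_\lambda$. Your argument simply makes this scaling explicit.
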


\begin{proof}[Proof of Theorem $\ref{thm:ctble_sum_op_ranges}$]
  Choose and fix an associated Hilbert space $|\cH|$ and norm
  $\|\cdot\|$ for $\cH$.  By assumption the scalars
  $\{\|T_\lambda\|\}_\linL$ are Moore-Smith summable.  By the Cauchy
  criterion, for every $\varepsilon>0$ there is a finite set
  $\cF_0\subseteq\Lambda$ such that
  $\sum_{\lambda\in\cF} \|T_\lambda\|<\varepsilon$ for every finite
  set $\cF\subseteq\Lambda$ disjoint from $\cF_0$.  Consider some
  fixed $x=\{x_\lambda\}_\linL$ in $\cH^\Lambda$.  Given
  $\varepsilon>0$ there is a finite set $\cF_{0,x}\subseteq\Lambda$
  such that
  \begin{equation*}
    \sum_{\lambda\in\cF} \|T_\lambda x\|<\varepsilon/\|x\|
  \end{equation*}
  for every finite set $\cF\subseteq\Lambda$ disjoint from
  $\cF_{0,x}$.  Then for all such $\cF$,
  \begin{equation}\label{aug22a}
    \|\sum_{\lambda \in\cF} T_\lambda x_\lambda\| \leq \sum_{\lambda
      \in\cF} \|T_\lambda x_\lambda\| \leq \sum_{\lambda \in\cF}
    \|T_\lambda\| \|x_\lambda\| \leq \|x\| \sum_{\lambda \in\cF}
    \|T_\lambda\| < \varepsilon.
  \end{equation}
  It follows that the family $\{T_\lambda x_\lambda\}_\linL$
  Moore-Smith summable \cite[Theorem A.2(3)]{Rovnyak2025}.  Since $x$
  is arbitrary, the correspondence
  $T\colon x \to \sum^{\scalebox{0.4}{\text{MS}}}_{\linL} T_\lambda
  x_\lambda$ is an everywhere defined mapping from $\cH^\Lambda$ into
  $\cH$.  The mapping is clearly linear.  The first three inequalities
  in \eqref{aug22a} hold for every finite subset $\cF$ of~$\Lambda$,
  and the sums $\sum_{\lambda\in\cF} \|T_\lambda \|$ in the fourth
  term of \eqref{aug22a} have a uniform upper bound.  Thus $T$ is
  bounded and hence continuous.  It is immediate from the formula
  defining $T$ that $\ran T$ is a sum of $\{\cM_\lambda\}_{\linL}$.
\end{proof}

The same conclusion holds with a weaker condition on operator norms.

\begin{theorem} \label{thm:unctble_sum_op_ranges} Let
  $\{\cM_\lambda\}_{\linL}$ be a collection of operator ranges in a
  \krein space $\cH$, $\{T_\lambda\}_{\linL}$ operators in $\lh$ with
  $\ran T_\lambda = \cM_\lambda$ for each $\linL$.  The following are
  equivalent:
  \begin{enumerate}
  \item[(1)] For some and hence any norm for $\cH$ there is a constant
    $C > 0$ such that $ \| \sum_{\lambda\in\cF} T_\lambda\| \leq C$
    for every finite $\cF \subseteq \Lambda$.
  \item[(2)] The formula
    \begin{equation}\label{aug20bb}
      Tx = \sum^{\scalebox{0.5}{\text{MS}}}_{\linL} T_\lambda x_\lambda, 
      \quad x=\{x_\lambda\}_\linL \in \cH^\Lambda,
    \end{equation}
    defines an operator in $\cL(\cH^\Lambda,\cH)$.
  \end{enumerate}
  In this case $\ran T$ is a sum of $\{\cM_\lambda\}_{\linL}$.
\end{theorem}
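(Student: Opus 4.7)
The plan is to view both conditions as statements about the net of partial-sum operators $S_\cF \in \cL(\cH^\Lambda,\cH)$, indexed by finite subsets $\cF\subseteq\Lambda$ directed by inclusion and given by $S_\cF x = \sum_{\lambda\in\cF} T_\lambda x_\lambda$, and to bridge the two via the Uniform Boundedness Principle together with a density argument.

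For the implication $(2)\Rightarrow(1)$, the existence of $T\in\cL(\cH^\Lambda,\cH)$ as the pointwise Moore-Smith limit of $S_\cF$ gives boundedness of $\{\|S_\cF x\|\}_\cF$ for every $x\in\cH^\Lambda$; Banach--Steinhaus then yields $\sup_\cF \|S_\cF\|_{\cL(\cH^\Lambda,\cH)}<\infty$. Translating this into the bound on $\sum_{\lambda\in\cF} T_\lambda \in\lh$ asserted in (1) is accomplished by evaluating $S_\cF$ on specific elements of $\cH^\Lambda$ built from a single fixed $y\in\cH$ placed across the coordinates $\lambda\in\cF$, using the relationship between the norms on $\cH$ and on $\cH^\Lambda$ developed in Part~I.

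For the reverse $(1)\Rightarrow(2)$, the hypothesis yields uniform boundedness of the family $\{S_\cF\}\subset\cL(\cH^\Lambda,\cH)$. The finitely supported sequences form a dense subspace of $\cH^\Lambda$, and on such an $x$ with support $\cG$ the net $(S_\cF x)_\cF$ is eventually constant (equal to $\sum_{\lambda\in\cG} T_\lambda x_\lambda$ whenever $\cF\supseteq\cG$), hence Moore-Smith convergent. A standard $\varepsilon/3$ argument combining uniform boundedness with convergence on this dense set produces Moore-Smith convergence of $S_\cF x$ for every $x\in\cH^\Lambda$; defining $Tx$ as the limit yields a linear operator with $\|Tx\|\leq C\|x\|$, so $T\in\cL(\cH^\Lambda,\cH)$ as required.

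For the final assertion that $\ran T$ is a sum of $\{\cM_\lambda\}_{\linL}$: given $v\in\cH$ and $\mu\in\Lambda$, the element $x\in\cH^\Lambda$ with $x_\mu=v$ and $x_\lambda=0$ for $\lambda\neq\mu$ satisfies $Tx = T_\mu v\in\cM_\mu$, so $\cM_\mu\subseteq\ran T$ for every $\mu$, whence $\mathrm{span}_\lambda\,\cM_\lambda\subseteq\ran T$. Conversely, the formula gives $Tx=\sum^{\mathrm{MS}}_\linL T_\lambda x_\lambda\in\sum^{\mathrm{MS}}_\linL\cM_\lambda$; since $\ran T$ is automatically an operator range as the range of a bounded operator, it satisfies the inclusions in Definition~\ref{defbigoplus}. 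The main obstacle is the bookkeeping in $(2)\Rightarrow(1)$ between the $\cL(\cH^\Lambda,\cH)$-bound produced by Banach--Steinhaus and the $\lh$-bound required by (1); this conversion depends essentially on the specific construction of $\cH^\Lambda$ from Part~I.
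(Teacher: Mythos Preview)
There is a substantive gap in each direction.

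For $(1)\Rightarrow(2)$, you assert that ``the hypothesis yields uniform boundedness of the family $\{S_\cF\}\subset\cL(\cH^\Lambda,\cH)$,'' but this is precisely the nontrivial step, not a preliminary. Condition~(1) bounds $\sum_{\lambda\in\cF}T_\lambda$ acting on a \emph{single} vector of~$\cH$, whereas $S_\cF$ acts on tuples with distinct entries; a crude Cauchy--Schwarz estimate gives only $\|S_\cF\|\le\big(\sum_{\lambda\in\cF}\|T_\lambda\|^2\big)^{1/2}$, which is not uniform in~$\cF$. The paper closes this gap by passing to the adjoint: one has $\|S_\cF^*u\|^2=\sum_{\lambda\in\cF}\|T_\lambda^*u\|^2$, and an inequality cited from \cite{Rovnyak2021} (a sign-averaging argument, using that (1) bounds $\|\sum_{\cF'}T_\lambda^*\|$ for \emph{every} subset $\cF'\subseteq\cF$) yields $\sum_{\lambda\in\cF}\|T_\lambda^*u\|^2\le 4C^2\|u\|^2$. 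This produces a bounded operator $W:u\mapsto\{T_\lambda^*u\}_\lambda$ in $\cL(\cH,\cH^\Lambda)$, and the paper takes $T=W^*$, sidestepping your density/$\varepsilon/3$ argument entirely. Your route would also succeed, but only once this adjoint estimate is supplied.

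For $(2)\Rightarrow(1)$, your test vector (the fixed $y$ placed in each coordinate of~$\cF$) has norm $\sqrt{|\cF|}\,\|y\|$ in~$\cH^\Lambda$, so the Banach--Steinhaus bound on $S_\cF$ delivers only $\big\|\big(\sum_{\lambda\in\cF}T_\lambda\big)y\big\|\le M\sqrt{|\cF|}\,\|y\|$, which is not uniform in~$\cF$. This is not bookkeeping that Part~I's construction of $\cH^\Lambda$ will fix: the $\cL(\cH^\Lambda,\cH)$-norm of $S_\cF$ genuinely does not control the $\lh$-norm of $\sum_{\lambda\in\cF}T_\lambda$ uniformly. (For instance, with $\Lambda=\bbN$ and $T_n=n^{-1}\,1_\cH$ on a Hilbert space, Cauchy--Schwarz shows $T\in\cL(\cH^\Lambda,\cH)$, yet $\big\|\sum_{n\le N}T_n\big\|=\sum_{n\le N}n^{-1}$ is unbounded.) The paper's argument here is different: it asserts that (2) forces, for each fixed $f\in\cH$, the partial sums $\sum_{\lambda\in\cF}T_\lambda f$ to be bounded (citing a result from Part~I), and then applies the Uniform Boundedness Principle directly to the operators $\sum_{\lambda\in\cF}T_\lambda\in\lh$ rather than to the $S_\cF$.
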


\begin{proof}
  Assume (1).  Choose and fix an associated Hilbert space $|\cH|$ and
  norm $\|\cdot\|$ for $\cH$.  Then $|\cH|^\Lambda$ is an associated
  Hilbert space for $\cH^\Lambda$.  An equivalent form of our
  assumption is that $\| \sum_{\lambda\in\cF} T_\lambda^*\| \leq C$
  for every finite $\cF \subseteq \Lambda$.  As in the proof of
  \cite[Theorem 3.1]{Rovnyak2021}, this implies that
  $ \sum_{\lambda\in\cF} \|T_\lambda^* u\|^2 \leq 4C^2 \|u\|^2 $,
  $u\in\cH$, for all finite $\cF\subseteq\Lambda$, and thus
  \begin{equation*}
    \sum^{\scalebox{0.5}{\text{MS}}}_{\linL} \|T_\lambda^* u\|^2 \leq
    4C^2 \|u\|^2, \qquad u\in\cH.
  \end{equation*}
  Hence there is an operator $W\in\cL(|\cH|,|\cH|^\Lambda)$ such that
  \begin{equation*}
    Wu = \{T_\lambda^*u\}_\linL , \qquad u\in\cH.
  \end{equation*}
  Then also $W\in\cL(\cH,\cH^\Lambda)$, and
  $W^*\in\cL(\cH^\Lambda,\cH)$.
 
  Suppose $x=\{x_\lambda\}_\linL \in \cH^\Lambda$.  Fix
  $\mu\in\Lambda$, and let
  $x^{(\mu)} = \{x^{(\mu)}_\lambda\}_\linL \in\cH^\Lambda$, where
  $x^{(\mu)}_\mu = x_\mu$ and all other terms are zero.  Then
  \begin{equation*}
    W^* x^{(\mu)} = T_\mu x_\mu.
  \end{equation*}
  For if $u$ is any element of~$\cH$, then
  \begin{align*} {\ip{W^*x^{(\mu)}}{u}}_\cH &=
    {\ip{x^{(\mu)}}{Wu}}_{\cH^\Lambda}
    = {\ip{ \{x^{(\mu)}_\lambda\}_\linL}{\{T_\lambda^* u\}_\linL}}_{\cH^\Lambda}\\
    &\hskip1cm =
    {\ip{x^{(\mu)}_\mu}{T_\mu^* u}}_{\cH} = {\ip{T_\mu x_\mu}{u}}_{\cH}.
  \end{align*}
  Since $x$ is the Moore-Smith sum of $\{x^{(\mu)}\}_{\mu\in\Lambda}$,
  and $W^*$ is continuous, $W^*x$ is the Moore-Smith sum of
  $W^* x^{(\mu)} =T_\mu x_\mu$, $\mu\in\Lambda$, in~$\cH$.  Renaming
  $W^*$ as $T$, \eqref{aug20bb} is obtained.  It is immediate from
  \eqref{aug20bb} that $\ran T$ is a sum of the family
  $\{\cM_\lambda\}_{\linL}$.

  Conversely, if (2) holds, then for each $f$ in $\cH$ any finite sum
  of vectors $\{T_\lambda f\}_\linL$ is uniformly bounded (see
  Theorem~A.2 of~\cite{Rovnyak2025}).  By the Uniform Boundedness
  Principle, the finite sums of operators $\{T_\lambda \}_\linL$ have
  a uniform bound; that is, (1) holds.
\end{proof}

Here are a few examples illustrating aspects of the last theorems.

\goodbreak

\begin{example}
  \label{exmpl:span_unctbly_many_op_ranges_may_not_be_an_op_range}
  As observed in Example~\ref{exmpl:R_not_regular_in_S} and subsequent
  examples, in a separable Hilbert space $\cH$ there is a space $\cR$
  which is \emph{not} an operator range.  For each $x \in \cR$ with
  $\|x\| = 1$, set $\cM_x = \mathrm{span}\, x$.  Being one
  dimensional, these spaces are obviously the ranges of projection
  operators, and $\cR = \mathrm{span}_x\, M_x$.  Of course,
  $\bigvee_x \cM_x = \overline{\cR}$ is the range of the projection
  onto $\overline{\cR}$.  It is an open question whether there is an
  operator range $\cM$ containing $\cR$ and properly contained in
  $\overline{\cR}$.
\end{example}

\begin{example}
  \label{exmpl:minimal_or_not}
  On the Hilbert space $\cH = \mathbb C^\bbN (= \ell^2)$, consider the
  Toeplitz operator
  \begin{equation*}
    T :=
    \begin{pmatrix}
      2 & 1 & 0 & \cdots \\
      1 & \ddots & \ddots & \ddots \\
      0 & \ddots & \ddots & \ddots \\
      \vdots & \ddots & \ddots & \ddots
    \end{pmatrix}.
  \end{equation*}
  This is easily seen to be positive with dense, non-closed range.
  Write $c_k$ for the $k$th column of $T$, and $\cM_k$ for the span of
  $c_k$.  Then the elements of the family $\{\cM_k\}_k$ are pairwise
  disjoint and $\mathrm{span}_k\, \cM_k \subseteq \ran T$.  Let
  $\{e_k\} \subset \cH$ to be the standard basis for $\cH$, so that
  $c_k = T e_k$.  Evidently, any $x\in \cH$ is a Moore-Smith sum,
  $x = \sum^{\scalebox{0.4}{\text{MS}}}_k x_k e_k$, and so
  $Tx = \sum^{\scalebox{0.4}{\text{MS}}}_k x_k Te_k =
  \sum^{\scalebox{0.4}{\text{MS}}}_k x_k c_k$.  Thus
  $\ran T \subseteq \sum^{\scalebox{0.4}{\text{MS}}}_k \cM_k$, and so
  $\ran T$ is a sum of $\{\cM_k\}$.  (With $T_j = TP_j$, $P_j$ the
  selfadjoint projection onto $\mathrm{span}\,e_j$, $j \geq 1$, this
  illustrates Theorem~\ref{thm:unctble_sum_op_ranges}).

  Let $h \in
  \cH$ be any vector with infinite support; that is, with infinitely
  many non-zero entries.  Define the diagonal operator
  $D_h$ with $(n,n)$-entry equal to $h_n$ if $h_n \neq 0$ and
  $1$ otherwise.  Then $\ran D_h$ is dense in $\cH$ and $h\notin \ran
  D_h$.  Define $T_h = TD_h$.  Thus $Th \notin \ran
  T_h$.  Note that $T_h e_k$ spans
  $\cM_k$.  Consequently, $\mathrm{span}_k\, \cM_k \subseteq \ran T_h
  \subset \ran T$, where the second containment is proper.  So $\ran
  T_h$ is a sum of $\{\cM_k\}$.

  By \cite[Cor.~2,~p.~260]{FW1971}, the intersection of any two
  operator ranges is an operator range.  So it is natural to wonder if
  there is a bounded operator $\tilde{T}$ with minimal range, in the
  sense that its range is a sum of $\{\cM_k\}$ contained in any other
  sum of these spaces.  There is no loss in generality in assuming
  that, if it exists, $\tilde{T} \geq 0$.  By construction,
  $\ran \tilde{T} \subseteq \ran T_h$ for all $h$ with infinite
  support, and so $Th \notin \ran\tilde{T}$ for all such $h$.

  For each $k$, choose $\tilde{e}_k \in \cH$ such that
  $\tilde{T}\tilde{e}_k = c_k$, and set
  $\alpha_k = 1/(2^k\|\tilde{e}_k\|$).  Then
  $f = \sum_k \alpha_k \tilde{e}_k \in \cH$.  Suppose that
  $\tilde{T} f$ has finite support.  Then there is $n$ such that for
  all $k > n$, the $k$th entry of $\tilde{T} f$ is $0$.  This means
  that $\alpha_{k+1} = -(\alpha_{k-1} + 2\alpha_{k}) < 0$,
  contradicting the fact that $\alpha_k > 0$ for all $k$.  Hence
  $\tilde{T} f$ has infinite support.  Since
  $\ran \tilde{T} \subseteq \ran T$, $\tilde{T} f = Th$ for some
  $h \in \cH$.  This $h$ has infinite support since otherwise $Th$
  would have finite support.  Recall that for $T_h$, $\ran T_h$ is a
  sum of $\{\cM_k\}_k$ with $h \notin \ran T_h$.  By assumption,
  $\ran \tilde{T} \subseteq \ran T_h$, implying that
  $Th \in \ran T_h$, giving a contradiction.

  Hence there is no minimal operator range $\cM$ which is a sum of
  $\{\cM_k\}_k$.  Moreover, this implies that
  $\mathrm{span}_k\, \cM_k$ is a subspace of $\cH$ which is \emph{not}
  an operator range.
\end{example}

\begin{example}
  \label{exmpl:unctble_sum_of_same_space}
  In this example, the set $\{\cM_\lambda\}_{\linL}$ is taken to be be
  an infinite collection of operator ranges in a Hilbert space $\cH$
  where $\cM_\lambda = \cM \neq \{0\}$ for all $\lambda$.  In this
  case, $\mathrm{span}_\lambda \cM_\lambda = \cM$ so $\cM$ is a sum of
  these operator ranges.

  Alternately, $\overline{\cM}$ is also a sum of these spaces.
  Obviously, $\overline{\cM}$ is an operator range since it is the
  range of a projection.  Now suppose that
  $x\in \overline{\cM}\backslash \cM$.  For $n \in \bbN$, choose
  $x_n \in \cM$ such that $x_0 = 0$ and $z_n = x- x_n $ satisfies
  $\|z_n\| = \|x - x_n\| \le \|x\|/2^n$.  Set
  $y_n = x_{n} - x_{n-1} = z_{n-1} - z_n \in \cM$, $n\ge 1$.  Then
  $\|y_n\| \le \left(\tfrac{1}{2^n} + \tfrac{2}{2^n}\right) \|x\| =
  \tfrac{3}{2^n} \|x\|$ and $y_1 +y_2 +\cdots+ y_n = x_n \to x$ as
  $n\to\infty$.  Hence $\{y_n\}$ is absolutely summable with
  $\sum \|y_n\| \leq 3\|x\|$.  By the Cauchy criterion, $\{y_n\}$ is
  Moore-Smith summable, and $\sum^{\scalebox{0.4}{\text{MS}}} y_n = x$.
  In other words,
  $\sum^{\scalebox{0.4}{\text{MS}}} \cM = \overline{\cM}$, and so
  $\overline{\cM}$ is a sum of $\{\cM_\lambda\}_{\linL}$.

  Note that Example~6.3 of~\cite{Rovnyak2025} shows more generally
  that in an orthogonal direct sum of \krein spaces, the closed span
  and Moore-Smith may differ.
\end{example}

\begin{example}
  \label{exmpl:T_may_not_be_closable}
  Ranges of closed operators are operator ranges in the usual sense.
  Unfortunately, if a condition like that required in
  Theorem~\ref{thm:unctble_sum_op_ranges} does not hold, then the
  operator $T$ constructed there will generally not be closable, and
  so the approach taken in the proof of
  Theorem~\ref{thm:unctble_sum_op_ranges} does not appear to
  immediately generalize.  Here is a simple example illustrating the
  problem.

  Let $\cH = \mathbb C$, and suppose $T$ on $\cH^\bbN$ to $\cH$ is the
  row operator with all entries~$1$.  Vectors in $\cH^\bbN$ which have
  only finitely many non-zero entries (the set of which is dense) are
  in the domain of $T$.  Thus the vector $x_n$ with the first $n$
  entries equal to $1/n$ is in the domain of $T$ and
  $\|x_n\|^2 = 1/n$.  So $x_n \to 0$ in norm with $n$.  However,
  $Tx_n = 1$ for all $n$, meaning that $T$ is not closable.

  Admittedly, this example could have been dealt with since the number
  of operators is countable, but uncountable variations are easily
  constructed.
\end{example}

\goodbreak

\section{Convergence of nets of projections}
\label{sec:conv-nets-proj}

A more relaxed notion of projection is used when considering sums of
pseudo-regular spaces, as the selfadjoint projections which suffice
for regular spaces are no longer enough here.  On a \krein space
$\cH$, call $P\in\lh$ a \textbf{projection} if $P^2=P$.  If $P$ is a
projection, so is $1-P$; moreover, $\ran P=\ker(1-P)$, and
$\ker P = \ran(1-P)$.  The range of a projection is closed, and every
closed subspace of $\cH$ is the range of a projection.  If
$\cM=\ran P$ and $\cN=\ker P$ for some projection $P\in\lh$, then
$\cH = \cM\pdot\cN$.  Conversely, any pair of closed subspaces $\cM$
and $\cN$ such that $\cH = \cM\pdot\cN$ has this form for a unique
projection $P\in\lh$.

There is an even more general notion of projection on \krein spaces
allowing for unbounded operators or even linear relations; see
\cite{BHdS} and~\cite{Arias-etal2023}.  While interesting in the
context of quasi-pseudo-regular spaces, it is not addressed here.

\begin{lemma}[\!\!{\cite[p.~481]{DS-I}}]
  \label{projorder}
  Let $P_1,P_2\in\lh$ be projections, and set $\cM_j=\ran P_j$ and
  $\cN_j=\ker P_j$, $j=1,2$.  Then
  \begin{itemize}
  \item[(1)] $\cM_1\subseteq\cM_2$ if and only if $P_2P_1=P_1$;
  \item[(2)] $\cM_1\subseteq\cM_2$ and $\cN_1\supseteq\cN_2$ if and
    only if $P_2P_1=P_1P_2=P_1$.
  \end{itemize}
\end{lemma}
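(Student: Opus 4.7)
The plan is to prove each equivalence by straightforward algebraic manipulation, exploiting the standard characterizations $\cM_j = \ran P_j = \ker(1-P_j)$ and $\cN_j = \ker P_j$ of the range and kernel of an idempotent.

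For part (1), to establish the forward direction I would fix an arbitrary $x\in\cH$ and observe that $P_1 x\in\cM_1\subseteq\cM_2 = \ker(1-P_2)$, so $(1-P_2)P_1 x = 0$, yielding the operator identity $P_2 P_1 = P_1$. For the converse, assuming $P_2 P_1 = P_1$, any $x\in\cM_1$ satisfies $x = P_1 x = P_2(P_1 x)\in\ran P_2 = \cM_2$, so $\cM_1\subseteq\cM_2$.

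Once (1) is in hand, part (2) reduces to showing that $\cN_1\supseteq\cN_2$ is equivalent to $P_1 P_2 = P_1$. In the forward direction, for arbitrary $x\in\cH$ one has $(1-P_2)x\in\cN_2\subseteq\cN_1$, so $P_1(1-P_2)x = 0$, which gives $P_1 P_2 = P_1$. Conversely, if $P_1 P_2 = P_1$ and $x\in\cN_2$ then $P_1 x = P_1 P_2 x = 0$, so $x\in\cN_1$. Combined with (1), this yields the full equivalence.

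This is a routine projection calculation with no real obstacle. The only thing to be careful about is keeping the two identities $P_2 P_1 = P_1$ and $P_1 P_2 = P_1$ clearly separated and pairing each with the correct containment ($\cM_1\subseteq\cM_2$ versus $\cN_1\supseteq\cN_2$); selfadjointness plays no role, so the argument lives entirely at the algebraic level of idempotents in $\lh$.
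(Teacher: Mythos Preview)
Your argument is correct; the paper itself gives no proof of this lemma but simply cites \cite[p.~481]{DS-I}, so there is nothing further to compare against.
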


It is also known that a closed subspace $\cM$ of a \krein space $\cH$
is pseudo-regular if and only if $\cM = \ran Q$, where $Q\in\lh$ is a
\textbf{normal projection}, that is, $Q^*Q=QQ^*$.  This is proved by
Maestripieri and Per{\'{\i}}a in Theorem~4.3 of
\cite{MaestripieriEtAl2013} using the characterization of
pseudo-regular subspaces in
Theorem~\ref{thm-S_qpr_iff_op_range_and_SplusSperp_closed}.  An
important difference with the regular case is that the normal
projection $Q$ is not determined by~$\cM$.  Observe that for any
normal projection~$Q$, $Q^*$ is a normal projection.

\begin{theorem}\label{ThA}
  Let $\cH$ be a \krein space, $\{P_d\}_\dinD$ a net of projections in
  $\lh$ such that
  \begin{equation}\label{ThAa}
    P_dP_{d'} = P_{d'}P_d = P_d \quad\text{whenever}\quad d'\ge d.
  \end{equation}
  Assume that relative to some and hence any associated norm
  $\|\cdot\|$ for~$\cH$, there is a $C>0$ such that
  \begin{equation}\label{ThAb}
    \|P_d\|\le C,\qquad \dinD.
  \end{equation}
  Set $\cM_d=\ran P_d$, $\cN_d=\ker P_d$, $\dinD$, and
  \begin{equation}\label{ThAc}
    \cM=\bigvee_{\dinD} \cM_d, \qquad \cN = \bigcap_\dinD\cN_d.
  \end{equation}
  Then there is a projection $P\in\lh$ such that
  \begin{equation}\label{ThAd}
    \lim_\dinD P_d f = Pf, \qquad f\in\cH,
  \end{equation}
  in the strong topology of~$\cH$.  Moreover, $\ran P=\cM$ and
  $\ker P = \cN$.
\end{theorem}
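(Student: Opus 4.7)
The plan is to first prove that $\cH = \cM\, \pdot\, \cN$ as an algebraic direct sum of closed subspaces, which yields a bounded projection $P$ with $\ran P = \cM$ and $\ker P = \cN$; the convergence $P_df \to Pf$ will then follow by a simple approximation. Throughout, I work in an associated Hilbert space for~$\cH$, so the strong topology coincides with the norm topology and $\|P_d\|\le C$. By Lemma~\ref{projorder}, hypothesis \eqref{ThAa} forces $\{\cM_d\}_\dinD$ to be increasing and $\{\cN_d\}_\dinD$ to be decreasing, so in particular $\cM$ is the norm closure of the nested union $\bigcup_\dinD \cM_d$.

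For the inclusion $\cH\subseteq\cM+\cN$, I would use weak compactness. Fix $f\in\cH$; the net $(P_df)_\dinD$ lies in a ball of radius $C\|f\|$, so by Banach--Alaoglu there is a subnet with $P_{d_\alpha}f\to u$ weakly. Since each $P_{d_\alpha}f\in\cM_{d_\alpha}\subseteq\cM$ and $\cM$ is norm-closed (hence weakly closed), $u\in\cM$. For any fixed $d$, cofinality of the subnet gives $d_\alpha\ge d$ eventually, and then \eqref{ThAa} yields $P_d(P_{d_\alpha}f)=P_df$; applying the weakly continuous operator $P_d$ to the subnet produces $P_du=P_df$, so $P_d(f-u)=0$ for every $d$. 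Hence $f-u\in\bigcap_\dinD\cN_d=\cN$ and $f\in\cM+\cN$.

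The main obstacle is verifying that $\cM\cap\cN=\{0\}$, because elements of $\cM$ need not lie in any single $\cM_d$. The key observation is that the uniform bound converts an approximation into a genuine estimate. Given $v\in\cM\cap\cN$, pick $v_\beta\in\cM_{d_\beta}$ with $v_\beta\to v$ in norm. For any $d\ge d_\beta$, the inclusion $\cM_{d_\beta}\subseteq\cM_d$ gives $P_dv_\beta=v_\beta$, while $v\in\cN\subseteq\cN_d$ gives $P_dv=0$. Therefore
\begin{equation*}
  \|v_\beta\| = \|P_d(v_\beta - v)\| \le C\,\|v_\beta - v\|,
\end{equation*}
and letting $\beta$ grow forces $v_\beta\to 0$, so $v=0$.

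With $\cH=\cM\,\pdot\,\cN$ established for closed subspaces, the algebraic projection $P$ with range $\cM$ and kernel $\cN$ is bounded (closed graph, or directly from $\|P\|\le C$ via the forthcoming SOT limit). For the convergence, decompose $f=m+n$ with $m\in\cM$, $n\in\cN$. Then $P_dn=0$ for every $d$, so it suffices to show $P_dm\to m$. Approximate $m$ by $m_\beta\in\cM_{d_\beta}$ with $m_\beta\to m$; for $d\ge d_\beta$, $P_dm_\beta = m_\beta$, hence
\begin{equation*}
  \|P_dm - m\| \le \|P_d\|\,\|m - m_\beta\| + \|m_\beta - m\| \le (C+1)\|m_\beta - m\|.
\end{equation*}
Choosing $\beta$ so that the right-hand side is small and then $d\ge d_\beta$ gives $P_dm\to m$, so $P_df\to m = Pf$ in the strong topology, completing the argument.
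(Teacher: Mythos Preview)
Your proof is correct and follows essentially the same approach as the paper: weak compactness of bounded sets to obtain the decomposition $\cH=\cM+\cN$, and the uniform bound $\|P_d\|\le C$ together with the density of $\bigcup_d\cM_d$ in $\cM$ to get both $\cM\cap\cN=\{0\}$ and the strong convergence $P_d\to P$. The only cosmetic differences are that you show $f-u\in\cN$ via weak continuity of $P_d$ rather than weak closedness of $\cN_d$, and you verify $\cM\cap\cN=\{0\}$ by a direct estimate before (rather than as a consequence of) the convergence $P_d m\to m$ on $\cM$.
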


By Lemma \ref{projorder}, the condition \eqref{ThAa} implies that
\begin{equation}\label{ThAaa}
  \ran P_{d'}\supseteq\ran P_{d}\quad\text{and}\quad
  \ker P_{d'}\subseteq\ker P_{d},
\end{equation}
whenever $d'\ge d$.  If the projections $P_d$, $d\in D$, commute, then
conversely \eqref{ThAaa} implies \eqref{ThAa}.

\begin{proof}[Proof of Theorem $\ref{ThA}$]
  Without loss of generality, assume that $\cH$ is a Hilbert space.
  This can be done since the projections $\{P_d\}_\dinD$ satisfy the
  same conditions relative to any associated Hilbert space $|\cH|$,
  and strong convergence in $\cH$ is the same as strong convergence in
  $|\cH|$.

  First show that $\cH=\cM+\cN$.  Fix $f\in\cH$.  For each $\dinD$,
  $P_d$ is a projection, and hence $\cH=\cM_d\pdot\cN_d$.  Write
  \begin{equation*}
    f = u_d + v_d,\qquad u_d\in\cM_d,\;v_d\in\cN_d.
  \end{equation*}
  By \eqref{ThAb}, each $u_d=P_d f, \dinD,$ is in the closed ball
  centered at $0$ of radius $C\|f\|$.  Since the closed ball is
  compact in the weak topology of~$\cH$, there is a subnet
  $\{u_{d_i}\}_{i\in E}$ of $\{u_d\}_{\dinD}$ that converges weakly to a
  vector $u\in\cH$ such that $\|u\|\le C\|f\|$.  Thus
  \begin{equation*}
    \lim_{ i\in E} \ip{u_{d_i}}{x} = \ip{u}{x},\quad x\in\cH.
  \end{equation*}
  Set $v=f-u$.  Then $\{v_{d_i}\}_{i\in E}$ is a subnet of
  $\{v_d\}_{\dinD}$ such that
  \begin{equation*}
    \lim_{ i\in E} \ip{v_{d_i}}{x} = \ip{v}{x},\quad x\in\cH,
  \end{equation*}
  that is, $v=\lim_{i\in E} v_{d_i}$ weakly.

  \smallskip
  \noindent \textbf{Claim:} $v\in\cN_d$ for each $\dinD$.  \smallskip

  Fix $d\in D$.  By the definition of a subnet, there is an $i_0$ in
  $E$ such that $i\ge i_0 \Rightarrow d_i\ge d$.  By replacing the net
  $\{v_{d_i}\}_{i\in E}$ by its truncation at~$i_0$ (that is,
  $\{v_{d_i}\}_{i\in E \text{,\,} i\ge i_0}$), it can be assumed
  that $i\ge i_0$ for all $i\in E$.  Then for all $i\in E$,
  $d_i\ge d$, hence
  \begin{equation*}
    \cN_{d_i} = \ker P_{d_i}\subseteq \ker P_{d} = \cN_d.
  \end{equation*}
  Therefore $v_{d_i} \in\cN_{d}$ for all $i\in E$, and hence
  $v \in \cN_d$ since a norm closed convex set is weakly closed.  The
  claim follows.

  \smallskip By the Claim, $v\in\cN$.  Thus each $f$ in $\cH$ has the
  form $f=u+v$ with $u\in\cM$ and $v\in\cN$.  Thus $\cH=\cM+\cN$.

  \smallskip Next that $\cM\cap\cN=\{0\}$ and hence $\cH=\cM\pdot\cN$.
  The key step is to show that $\lim_\dinD P_df=f$ in norm for every
  $f\in\cM$.  By \eqref{ThAc}, $\cM$ is the closure of the set of all
  finite sums of vectors in the subspaces $\cM_d$, $\dinD$.  Suppose
  $h=h_{d_1}+ \cdots h_{d_r}$, where $h_i\in\cM_{d_i}$, $i=1,\dots,r$.
  By the definition of a net, there is a $d$ in $D$ such that
  $d\ge d_i$, $i=1,\dots,r$, and then $h\in\cM_d$ by \eqref{ThAaa}.
  It follows that
  \begin{equation*}
    \cM = \overline{\bigcup_\dinD \cM_d}.
  \end{equation*}
  Hence for any $f\in\cM$ and $\varepsilon>0$, there is a $d_0\in D$
  and a vector $g\in\cM_{d_0}$ such that
  \begin{equation*}
    \|f-g\| < \frac{\varepsilon}{C+1},
  \end{equation*}
  where $C$ is the constant in \eqref{ThAb}.  For any $d\ge d_0$,
  $P_d\, g=g$ because $\cM_d\supseteq\cM_{d_0}$ by \eqref{ThAaa}.
  Hence for all $d\ge d_0$,
  \begin{align*}
    \|f-P_d f\| &= \|f-g + P_d(g-f)\| \\
                &\le \|f-g\| + \|P_d\| \|g-f\| \\
                &< \frac{\varepsilon}{C+1} + \frac{C \varepsilon}{C+1}\\
                &= \varepsilon,
  \end{align*}
  which means that $\lim_\dinD P_df=f$.  Now if both $f\in\cM$ and
  $f\in\cN$, then $\lim_\dinD P_df=f$ by what was just showed, and
  also $\lim_\dinD P_df=0$ because $f\in\cN$ and
  $\cN\subseteq\cN_d = \ker P_d$ for each $\dinD$; hence $f=0$.  It
  follows that $\cM\cap\cN=\{0\}$ and hence $\cH=\cM\pdot\cN$.

  Since $\cH=\cM\pdot\cN$, there is a unique projection $P\in\lh$ with
  range $\cM$ and kernel $\cN$.  The preceding argument incidentally
  shows that \eqref{ThAd} holds separately for $f\in\cM$ and
  $f\in\cN$, and hence it holds for all $f\in\cH$.  All of the
  statements in the theorem follow.
\end{proof}

\begin{theorem}\label{ThB}
  In Theorem $\ref{ThA}$ assume that the projections $P_d$, $\dinD$,
  are all normal.  Then $P$ is normal.  Moreover,
  \begin{equation}\label{ThBa}
    \lim_\dinD P_d^* g = P^*g,\qquad g\in\cH,
  \end{equation}
  in the strong topology of~$\cH$, and
  \begin{equation}\label{ThBb}
    \ran P^* = \bigvee_\dinD \ran P_d^*,
    \quad
    \ker P^* = \bigcap_\dinD \ker P_d^*.      
  \end{equation}   
\end{theorem}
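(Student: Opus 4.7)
The plan is to apply Theorem \ref{ThA} to the adjoint net $\{P_d^*\}_{\dinD}$ and then identify its limit with $P^*$. First I would verify that the hypotheses of Theorem \ref{ThA} transfer to the adjoint net: each $P_d^*$ is again a (normal) projection since $P_d$ is; taking adjoints of \eqref{ThAa} gives $P_{d'}^*P_d^* = P_d^*P_{d'}^* = P_d^*$ whenever $d'\ge d$; and, fixing any associated Hilbert space $|\cH|$ with fundamental symmetry $J$, the identity $T^* = JT^\sharp J$ (where $T^\sharp$ is the Hilbert-space adjoint) together with $\|J\|=1$ yields $\|P_d^*\| = \|P_d^\sharp\| = \|P_d\| \le C$. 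Theorem \ref{ThA} then produces a projection $Q\in\lh$ with $\lim_\dinD P_d^*g = Qg$ strongly for every $g\in\cH$, and with $\ran Q = \bigvee_\dinD \ran P_d^*$ and $\ker Q = \bigcap_\dinD \ker P_d^*$.

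Next I would identify $Q$ with $P^*$. For any $f,g\in\cH$, using the two strong limits from Theorem \ref{ThA} and the preceding step,
\begin{equation*}
\ip{f}{Qg} = \lim_\dinD \ip{f}{P_d^*g} = \lim_\dinD \ip{P_df}{g} = \ip{Pf}{g} = \ip{f}{P^*g},
\end{equation*}
so nondegeneracy of the inner product gives $Q=P^*$. This immediately yields \eqref{ThBa} and \eqref{ThBb}.

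Finally, to prove normality of $P$, I would exploit that both $(P_d)_\dinD$ and $(P_d^*)_\dinD$ are uniformly bounded and converge strongly to $P$ and $P^*$ respectively. For each $f\in\cH$ the telescoping estimate
\begin{equation*}
\|P_dP_d^*f - PP^*f\| \le \|P_d\|\,\|P_d^*f - P^*f\| + \|(P_d-P)P^*f\| \le C\|P_d^*f - P^*f\| + \|(P_d-P)P^*f\|
\end{equation*}
forces $P_dP_d^*f \to PP^*f$, and by the symmetric argument $P_d^*P_df \to P^*Pf$. Since each $P_d$ is normal, $P_dP_d^* = P_d^*P_d$, and passing to the strong limit gives $PP^* = P^*P$; that is, $P$ is normal.

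The only real obstacle is being careful about norm bounds when switching to the Krein-space adjoint, since in general it differs from the Hilbert-space adjoint; the factorization through the fundamental symmetry $J$ handles this cleanly. Everything else is a direct invocation of Theorem \ref{ThA} plus the standard fact that products of strongly convergent nets converge strongly when one factor is uniformly bounded.
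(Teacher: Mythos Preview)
Your proposal is correct and follows essentially the same route as the paper: apply Theorem~\ref{ThA} to the adjoint net, identify the resulting limit with $P^*$ via the inner product, and then pass the normality relation to the limit. The only cosmetic difference is in the last step, where the paper takes limits in the sesquilinear form $\ip{P_d^*f}{P_d^*g}=\ip{P_df}{P_dg}$ rather than in the operator products $P_dP_d^*$ and $P_d^*P_d$; both are standard and equivalent here.
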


In the special case that each $P_d$, $\dinD$, is selfadjoint, $P$ is
selfadjoint and \eqref{ThBa} and \eqref{ThBb} reduce to \eqref{ThAc}
and \eqref{ThAd}.  This special case also follows from Theorem 5.2.6
in \cite{Gheondea2022} and provides another way to prove the key
implication (2) $\Rightarrow$ (3) in Theorem \ref{maintheorem1}.  The
authors thank the anonymous referee who called attention to the paper
\cite{Gheondea1988} and \cite[Theorem 5.2.6]{Gheondea2022}, both of
which treat this special case, the former for sequences and the latter
for arbitrary nets.  Limits of nondegenerate subspaces are also
discussed in \cite{Gheondea1988, Gheondea2022}, but this is outside of
the scope of this work.

\begin{proof}
  The projections $P_d^*$, $\dinD$, satisfy \eqref{ThAa} and
  \eqref{ThAb} with each $P_d$ replaced by $P_d^*$.  Set
  $\widetilde \cM_d=\ran P_d^*$, $\widetilde\cN_d=\ker P_d^*$,
  $\dinD$, and
  \begin{equation}
    \label{ThAdd}
    \widetilde\cM=\bigvee_{\dinD} \widetilde\cM_d,
    \qquad \widetilde\cN = \bigcap_\dinD \widetilde\cN_d.
  \end{equation}
  By Theorem \ref{ThA} there is a projection $\widetilde P\in\lh$ such
  that
  \begin{equation}\label{ThAcc}
    \lim_\dinD P_d^* f = \widetilde P f, \qquad f\in\cH,
  \end{equation}
  strongly, and $\ran \widetilde P= \widetilde \cM$ and
  $\ker \widetilde P = \widetilde\cN$.  Since
  \begin{equation*}
    \ip{P_df}{g} = \ip{f}{P_d^* g}, \qquad \dinD,
  \end{equation*}
  in the limit one obtains $\ip{Pf}{g} = \ip{f}{\widetilde P g}$, and
  therefore $\widetilde P=P^*$.  Thus \eqref{ThBa} and \eqref{ThBb}
  follow from \eqref{ThAcc} and \eqref{ThAdd}.
  
  It remains to show that $P$ is normal.  By assumption, each $P_d$ is
  normal, and so $P_dP_d^* = P_d^*P_d$.  Hence for all $f,g\in\cH$,
  \begin{equation*}
    \ip{P_d^*f}{P_d^*g} = \ip{P_df}{P_dg}, \qquad \dinD.
  \end{equation*}
  By a standard result on inner products of nets of vectors,
  $\ip{P_d^* f}{P_d^* g}$ converges to $\ip{P^* f}{P^* g}$ and
  $\ip{P_d f}{P_d g}$ converges to $\ip{P f}{P g}$, in the limit.  So
  $\ip{P^*f}{P^*g} = \ip{Pf}{Pg}$.  Therefore $PP^*=P^*P$, and thus
  $P$ is normal.
\end{proof}

\section{Sums of pseudo-regular spaces}
\label{sec:sums-pseudo-regular}

The theorem below is a generalization of Theorem~4.3 of
\cite{Rovnyak2021}, and comes close to the form of
Theorem~\ref{maintheorem1} for regular spaces.  Since even finite
orthogonal direct sums of pseudo-regular spaces may not be
pseudo-regular (see Example~\ref{oct29a2024}), some rather strong
conditions are needed to guarantee that orthogonal direct sums of
pseudo-regular spaces are again pseudo-regular.

\begin{theorem}\label{ThC}
  Let $\{Q_\lambda\}_\linL$ be normal projections on a \krein space~$\cH$.
  For each
  $\linL$, let $\cM_\lambda, \widetilde\cM_\lambda$ be the ranges of
  $Q_\lambda, Q_\lambda^*$, and $\cN_\lambda, \widetilde\cN_\lambda$
  the kernels of $Q_\lambda, Q_\lambda^*$.  Suppose that for all
  $\lambda\neq\mu$,
  \begin{equation}\label{ThCa}
    \cM_\lambda\perp\cM_\mu,
    \quad
    \widetilde\cM_\lambda\perp\widetilde\cM_\mu,
    \quad
    \cM_\lambda\perp\widetilde\cM_\mu .
  \end{equation}
  The following are equivalent:
  \begin{enumerate}
  \item[(1)] For some and hence any norm $\|\cdot\|$ for~$\cH$, there
    is a constant $C>0$ such that
    $\|\sum_{\lambda\in \cF} Q_\lambda \| \le C$ for every finite
    subset $\cF$ of $\Lambda$.
    
    \smallskip
  \item[(2)] There is a normal projection $Q\in\lh$ such that for all
    $f$ in~$\cH$,
    \begin{equation}\label{ThCd}
      Qf=\sum^{\scalebox{0.5}{\text{MS}}}_{\linL} Q_\lambda f.      
    \end{equation}
  \end{enumerate}
  In this case, $Q^*f=\sum^{\scalebox{0.4}{\text{MS}}}_{\linL}
  Q_\lambda^* f$, and
  \begin{equation*}
    \begin{split}
      \ran Q &= \bigvee_\linL \cM_\lambda = \bigoplus_\linL
               \cM_\lambda,
      \quad \ker Q = \bigcap_\linL \cN_\lambda, \quad \text{and}\\
      \ran Q^* &= \bigvee_\linL \widetilde\cM_\lambda =
                 \bigoplus_\linL \widetilde\cM_\lambda ,
      \quad \ker Q^* = \bigcap_\linL \widetilde\cN_\lambda.
    \end{split}
  \end{equation*}
\end{theorem}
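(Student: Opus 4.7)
The plan is to reduce the equivalence to Theorems~\ref{ThA} and~\ref{ThB} applied to the net of finite partial sums $P_\cF := \sum_{\lambda\in\cF} Q_\lambda$, indexed by finite subsets $\cF$ of $\Lambda$ directed by inclusion.

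The implication (2)~$\Rightarrow$~(1) is the easy direction: Moore-Smith convergence of $\sum_\linL Q_\lambda f$ for each $f$ forces the partial sums $\{P_\cF f\}_\cF$ to be bounded in $\cH$ for each fixed $f$, and the Uniform Boundedness Principle then delivers a uniform operator norm bound on $\|P_\cF\|$, exactly as in the converse half of Theorem~\ref{thm:unctble_sum_op_ranges}.

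For (1)~$\Rightarrow$~(2), the central algebraic step is to show that the subspace orthogonality \eqref{ThCa} forces the operator identities $Q_\lambda Q_\mu = Q_\lambda^* Q_\mu = Q_\lambda Q_\mu^* = 0$ for $\lambda \neq \mu$. The lever is the identity $\ker T = (\ran T^*)^\perp$, valid in any \krein space by non-degeneracy, which gives $\cN_\lambda = \widetilde\cM_\lambda^\perp$ and $\widetilde\cN_\lambda = \cM_\lambda^\perp$. Each of the three product vanishings then corresponds to exactly one of the three orthogonality hypotheses; for instance, $\cM_\mu\perp\widetilde\cM_\lambda$ says $\cM_\mu\subseteq\cN_\lambda=\ker Q_\lambda$, whence $Q_\lambda Q_\mu = 0$. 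I expect this to be the main obstacle, both conceptually and as the place where all three assumptions in \eqref{ThCa} are actually used.

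Once the cross-terms vanish, $P_\cF^2 = P_\cF$ reduces to $\sum_\lambda Q_\lambda^2 = \sum_\lambda Q_\lambda$, and normality $P_\cF^* P_\cF = P_\cF P_\cF^*$ reduces to $\sum_\lambda Q_\lambda^* Q_\lambda = \sum_\lambda Q_\lambda Q_\lambda^*$, which holds since each $Q_\lambda$ is normal. The same vanishings give $P_\cF P_{\cF'} = P_{\cF'} P_\cF = P_\cF$ whenever $\cF\subseteq\cF'$, so hypothesis \eqref{ThAa} of Theorem~\ref{ThA} is satisfied, while \eqref{ThAb} is precisely the assumed bound. Theorem~\ref{ThB} then produces a normal projection $Q\in\lh$ with $Qf = \lim_\cF P_\cF f$ strongly (this is \eqref{ThCd}) and $Q^*f = \lim_\cF P_\cF^* f$. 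For the range and kernel identifications: $\cM_\lambda\subseteq\ran Q$ because $Qg=Q_\lambda g=g$ for $g\in\cM_\lambda$ (again by cross-term vanishing), while \eqref{ThCd} gives $\ran Q\subseteq \bigvee_\linL \cM_\lambda$, so $\ran Q=\bigvee_\linL\cM_\lambda$, which equals $\bigoplus_\linL\cM_\lambda$ by the orthogonality hypothesis. The equality $\ker Q=\bigcap_\linL\cN_\lambda$ comes from Theorem~\ref{ThB} together with $\ker P_\cF=\bigcap_{\lambda\in\cF}\cN_\lambda$, which is obtained by applying each $Q_\mu$ ($\mu\in\cF$) to $P_\cF f=0$. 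The statements for $Q^*$ follow by the identical argument applied to $\{Q_\lambda^*\}_\linL$, which satisfies the same hypotheses with the roles of $\cM_\lambda$ and $\widetilde\cM_\lambda$ exchanged.
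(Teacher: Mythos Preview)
Your proposal is correct and follows essentially the same route as the paper: define the net $P_\cF=\sum_{\lambda\in\cF}Q_\lambda$ over finite subsets, translate the orthogonality hypotheses \eqref{ThCa} into the operator identities $Q_\mu Q_\lambda=Q_\mu^*Q_\lambda=Q_\mu Q_\lambda^*=0$, deduce that each $P_\cF$ is a normal projection satisfying \eqref{ThAa}, and then invoke Theorems~\ref{ThA} and~\ref{ThB} for the forward direction and the Uniform Boundedness Principle for the converse. Your write-up is in fact more explicit than the paper's in justifying the range and kernel identifications (the paper simply asserts these follow from Theorems~\ref{ThA} and~\ref{ThB}); note only that the kernel formula $\ker Q=\bigcap_\linL\cN_\lambda$ comes from Theorem~\ref{ThA} rather than~\ref{ThB}, though since the latter presupposes the former this is harmless.
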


\begin{proof}
  Assume (1).  Let $D$ be the set of all finite subsets $\cF$ of
  $\Lambda$, directed by~$\supseteq$.  For each $\cF\in D$, set
  \begin{equation*}
    P_\cF=\sum_{\lambda\in\cF} Q_\lambda.
  \end{equation*}
  The relations \eqref{ThCa} have the equivalent forms
  $Q_\mu^*Q_\lambda= 0$, $Q_\mu Q_\lambda^*= 0$, and
  $Q_\mu Q_\lambda=0$.  By straightforward algebra, these equations
  imply that for each $\cF\in D$,
  \begin{equation*}
    P_\cF^*P_\cF = P_\cF P_\cF^*
    \quad\text{and}\quad
    P_\cF^2 = P_\cF,
  \end{equation*}
  that is, $P_\cF$ is a normal projection.  Further algebra shows that
  if $\cF$ and $\cF'$ are in $D$ and $\cF'\supseteq\cF$, then
  \begin{equation*}
    P_\cF P_{\cF'} = P_{\cF'} P_\cF = P_\cF.
  \end{equation*}
  By (1), $\|P_\cF\|\le C$, $\cF\in D$.  Thus the net of projections
  $\{P_\cF\}_{\cF\in D}$ satisfies the hypotheses of Theorems
  \ref{ThA} and \ref{ThB}.  By \eqref{ThAd} and \eqref{ThBa}, there is
  a normal projection $Q\in\lh$ such that for all $f$ in~$\cH$,
  \begin{equation*}
    \lim_{\cF\in D} P_\cF f = Qf, \qquad
    \lim_{\cF\in D} P_\cF^* f = Q^*f
  \end{equation*}
  in the strong topology of~$\cH$.  By the definition of Moore-Smith
  convergence, this is equivalent to \eqref{ThCd}, and so (2) follows.
  It is an elementary exercise to show that
  $Q^*f=\sum^{\scalebox{0.4}{\text{MS}}}_{\linL} Q_\lambda^* f$.
 
  Conversely, if (2) holds, using the argument in the proof of
  Theorem~\ref{thm:unctble_sum_op_ranges}, the operators
  $\{Q_\lambda \}_\linL$ have a uniform bound, that is, (1) holds.
  
  The final statement follows from Theorems \ref{ThA} and \ref{ThB}.
\end{proof}

As was pointed out, orthogonal subspaces of a \krein space need not
have zero intersection.  Nevertheless, it follows from the assumptions
of the theorem that for $\lambda \neq \mu$,
$\cM_\lambda \cap \cM_\mu = \{0\}$ since
$\cM_\lambda \subseteq \widetilde\cM_\mu^\bot$ implies that
$\cM_\lambda \cap \cM_\mu \subseteq \widetilde\cM_\mu^\bot \cap
\cM_\mu = \ker Q_\mu \cap \ran Q_\mu = \{0\}$.  The equality
$\widetilde\cM_\lambda \cap \widetilde\cM_\mu = \{0\}$ follows in the
same fashion.

Example~4.2 in \cite{Rovnyak2021} illustrates how normal projections
satisfying the conditions in the above theorem arise naturally as
wandering subspaces of isometries.

\section{Sums of quasi-pseudo-regular spaces}
\label{sec:sums-quasi-pseudo}

The goal now is to find a version of Theorem~\ref{maintheorem1} which
applies to qpr spaces.  The material from
Section~\ref{sec:sums-operator-ranges} plays a key role here.

\begin{theorem}\label{qpr_sums}
  Let $\{\cM_\lambda\}_{\linL}$ be a collection of pairwise orthogonal
  qpr spaces in a \krein space $\cH$.  Assume there are regular spaces
  $\{\cR_\lambda\}_\linL$ with corresponding selfadjoint projection
  operators $\{E_\lambda\}_\linL$, and operators $\{T_\lambda\}_\linL$
  in $\lh$ having neutral ranges $\{\cN_\lambda\}_\linL$, such that
  \begin{equation*}
    \cM_\lambda = \cR_\lambda \oplus \cN_\lambda, \qquad \linL.
  \end{equation*}
  Then the following are equivalent:
  \begin{enumerate}
  \item[(1)] For some and hence any norm for $\cH$ there is a $C >0$
    such that both $\|\sum_{\lambda\in\cF} E_{\lambda}\| \le C$ and
    $\|\sum_{\lambda\in\cF} T_{\lambda}\| \le C$ for every finite
    subset $\cF$ of $\Lambda$.
  \item[(2)] There is a selfadjoint projection $P \in \lh$ and an
    operator $T \in \cL(\cH^\Lambda, \cH)$ such that for all
    $f\in\cH$, and for all $x= (x_\lambda)_\lambda \in \cH^\Lambda$,  
    \begin{equation*}
      Pf = \sum^{\scalebox{0.5}{\text{MS}}}_{\linL} E_\lambda f,
      \qquad
      Tx = \sum^{\scalebox{0.5}{\text{MS}}}_{\linL} T_\lambda x_\lambda.
    \end{equation*}
    In this case, $\ran P = \bigoplusdot_\linL \cR_\lambda$, $\cN :=
    \ran T$ is a sum of $\{\cN_\lambda\}_\linL$, and
    \begin{equation*}
      \cM := \ran P \oplusdot \cN
    \end{equation*}
    is a sum of $\{\cM_\lambda\}_\linL$.
  \end{enumerate}
\end{theorem}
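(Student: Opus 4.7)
The plan is to reduce the equivalence to two already-established results: Theorem \ref{maintheorem1} applied to the regular family $\{\cR_\lambda\}_\linL$ with selfadjoint projections $\{E_\lambda\}_\linL$, and Theorem \ref{thm:unctble_sum_op_ranges} applied to the family $\{T_\lambda\}_\linL$ of operators whose ranges give the neutral summands $\{\cN_\lambda\}_\linL$. First, I would observe that pairwise orthogonality $\cM_\lambda \perp \cM_\mu$ for $\lambda\ne\mu$, together with the orthogonal decomposition $\cM_\lambda = \cR_\lambda \oplus \cN_\lambda$ within each summand, yields that every pair from the combined family $\{\cR_\lambda\}_\linL \cup \{\cN_\lambda\}_\linL$ (with distinct $\lambda$ indices, or within the same $\lambda$ but taking one of each type) is mutually orthogonal. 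In particular, $\{\cR_\lambda\}_\linL$ is a pairwise orthogonal family of regular subspaces.

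With this in place, the implication $(1)\Rightarrow(2)$ splits naturally. The condition $\|\sum_{\lambda\in\cF} E_\lambda\|\le C$ is precisely condition (2) of Theorem \ref{maintheorem1}, producing the selfadjoint projection $P$ with $Pf = \sum^{\scalebox{0.4}{\text{MS}}}_{\linL} E_\lambda f$ and $\ran P = \bigoplusdot_\linL \cR_\lambda$. The condition $\|\sum_{\lambda\in\cF} T_\lambda\|\le C$ is condition (1) of Theorem \ref{thm:unctble_sum_op_ranges}, producing $T\in\cL(\cH^\Lambda,\cH)$ with $Tx = \sum^{\scalebox{0.4}{\text{MS}}}_{\linL} T_\lambda x_\lambda$ and $\ran T$ a sum of $\{\cN_\lambda\}_\linL$. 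The reverse implication $(2)\Rightarrow(1)$ is obtained symmetrically: the existence of $P$ and uniform boundedness of its partial sums come from Theorem \ref{maintheorem1}, while the existence of $T$ as an everywhere defined bounded operator gives uniform boundedness of $\sum_{\lambda\in\cF}T_\lambda$ by the uniform boundedness principle, exactly as in the converse half of Theorem \ref{thm:unctble_sum_op_ranges}.

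For the concluding structural statements, set $\cN := \ran T$. I would first verify that $\cN$ is neutral: for $x,y\in\cH^\Lambda$, the inner product $\ip{Tx}{Ty}$ is the limit over finite $\cF,\cG$ of $\ip{\sum_{\lambda\in\cF}T_\lambda x_\lambda}{\sum_{\mu\in\cG}T_\mu y_\mu}$, and off-diagonal terms vanish by $\cN_\lambda\perp\cN_\mu$ while diagonal terms vanish because each $\cN_\lambda$ is neutral. A continuity argument of the same flavor gives $\ran P \perp \cN$ from $\cR_\lambda \perp \cN_\mu$ for all $\lambda,\mu$. Since $\ran P$ is regular and $\cN\subseteq(\ran P)^\perp$ is an operator range, Lemma \ref{lem:M_closed_then_R_plus_M_closed} delivers $\ran P\cap\cN=\{0\}$ and shows that $\cM = \ran P \oplusdot \cN$ is an operator range; being the orthogonal direct sum of a regular subspace and a neutral subspace, $\cM$ is qpr by Definition \ref{defpseudoreg}.

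Finally, to check that $\cM$ is a sum of $\{\cM_\lambda\}_\linL$ in the sense of Definition \ref{defbigoplus}, I would verify both inclusions in \eqref{sum_def}. The inclusion $\mathrm{span}_\linL \cM_\lambda \subseteq \cM$ holds because any $m\in\cM_\lambda$ decomposes as $r+n$ with $r\in\cR_\lambda\subseteq\ran P$ and $n\in\cN_\lambda\subseteq\ran T$. For $\cM\subseteq\sum^{\scalebox{0.4}{\text{MS}}}_\linL\cM_\lambda$, any $f\in\cM$ has the form $Pg + Tx$; then $Pg = \sum^{\scalebox{0.4}{\text{MS}}}_\linL E_\lambda g$ and $Tx = \sum^{\scalebox{0.4}{\text{MS}}}_\linL T_\lambda x_\lambda$, and by continuity of addition the termwise sums $E_\lambda g + T_\lambda x_\lambda \in \cM_\lambda$ are Moore--Smith convergent to $f$. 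The main obstacle I anticipate is purely bookkeeping around these Moore--Smith manipulations — in particular, justifying interchange of limits in the inner product computation showing $\cN$ is neutral, and confirming that adding two Moore--Smith sums indexed by the same directed set of finite subsets yields a Moore--Smith sum of the termwise combined family. Both follow from the standard fact that continuous linear operations (addition, the inner product) preserve Moore--Smith convergence, but they warrant explicit attention.
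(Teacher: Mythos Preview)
Your proposal is correct and follows essentially the same route as the paper: reduce the equivalence to Theorem~\ref{maintheorem1} for the $E_\lambda$ and Theorem~\ref{thm:unctble_sum_op_ranges} for the $T_\lambda$, then verify neutrality of $\cN$, orthogonality $\ran P\perp\cN$, the operator-range property of $\cM$, and the two inclusions of Definition~\ref{defbigoplus}. The only cosmetic difference is that you invoke Lemma~\ref{lem:M_closed_then_R_plus_M_closed} where the paper cites \cite[Theorem~2.2]{FW1971} directly, which amounts to the same thing.
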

 
\begin{proof}
  The equivalence of (1) and (2) follows Theorems~\ref{maintheorem1}
  and~\ref{thm:unctble_sum_op_ranges} with
  $\ran P = \bigoplusdot_\linL \cR_\lambda$, $\cN := \ran T$ is a sum
  of $\{\cN_\lambda\}_\linL$.

  To see that $\cN$ is neutral,
  consider any $f,g\in\cN$.  Then by \eqref{aug20bb}, for some
  $x,y\in\cH^\Lambda$,
  \begin{equation*}
    f=Tx = \sum^{\scalebox{0.5}{\text{MS}}}_{\linL} T_\lambda x_\lambda, \qquad  
    g=Ty = \sum^{\scalebox{0.5}{\text{MS}}}_{\mu\in\Lambda} T_\mu y_{\mu} ,
  \end{equation*}
  and
  \begin{equation*}
    \ip{f}{g} = \ip{ \sum^{\scalebox{0.5}{\text{MS}}}_{\linL} T_\lambda
      x_\lambda}{ \sum^{\scalebox{0.5}{\text{MS}}}_{\mu\in\Lambda} T_\mu
      y_{\mu}} 
    = \sum_{\lambda,\mu\in\Lambda} \ip{T_\lambda x_\lambda}{T_\mu
      y_{\mu}} 
    = 0,
  \end{equation*}
  because the spaces $\{\ran T_\lambda\}_\linL$ are neutral and
  pairwise orthogonal.
 
  A similar argument shows that $\bigvee_\linL\cR_\lambda$ and $\cN$
  are orthogonal.  Thus
  $\cN\subseteq (\bigvee_\linL \cR_\lambda)^\perp$, so that
  $(\bigvee_\linL\cR_\lambda)\cap\cN = \{0\}$ because
  $\bigvee_\linL \cR_\lambda$ is regular.  Then by \cite[Theorem
  2.2]{FW1971}, the subspace
  \begin{equation*}
    \cM = \ran P \, \oplusdot \, \ran T 
  \end{equation*}
  is an operator range.  Finally, $\cM$ contains the finite sums of
  $\{\cM_\lambda\}_\linL$, and by construction,
  \begin{equation*}
    \cM \subseteq \sum^{\scalebox{0.5}{\text{MS}}}_{\linL} \cR_\lambda +
    \sum^{\scalebox{0.5}{\text{MS}}}_{\linL} \ran T_\lambda 
    \subseteq \sum^{\scalebox{0.5}{\text{MS}}}_{\linL} ( \cR_\lambda +
    \ran T_\lambda) 
    = \sum^{\scalebox{0.5}{\text{MS}}}_{\linL} \cM_\lambda .
  \end{equation*}
  Therefore $\cM$ is a sum of the $\{\cM_\lambda\}_\linL$.
\end{proof}

\begin{corollary}
  \label{cor:sums_of_countable_qpr_spaces}
  In Theorem \ref{qpr_sums}, when $\Lambda$ is countable the
  assumption that $\|\sum_{\lambda\in\cF} T_{\lambda}\| \le C$ for
  every finite subset $\cF$ of $\Lambda$ can be omitted.
\end{corollary}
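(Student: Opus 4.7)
The plan is to exploit the freedom to rescale each $T_\lambda$ by a positive scalar without changing its range $\cN_\lambda$. Since $\Lambda$ is countable, enumerate it as $\{\lambda_n : n \in \bbN\}$ and set $\alpha_n = 2^{-n}(1 + \|T_{\lambda_n}\|)^{-1}$, $T'_{\lambda_n} = \alpha_n T_{\lambda_n}$. Then $\ran T'_{\lambda_n} = \ran T_{\lambda_n} = \cN_{\lambda_n}$, while $\|T'_{\lambda_n}\| \le 2^{-n}$.

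By the triangle inequality, $\|\sum_{\lambda\in\cF} T'_\lambda\| \le \sum_n 2^{-n} = 1$ for every finite $\cF \subseteq \Lambda$. Together with the surviving bound on $\|\sum_{\lambda\in\cF} E_\lambda\|$, both parts of condition (1) of Theorem~\ref{qpr_sums} hold for the rescaled family $\{T'_\lambda\}_\linL$. The theorem then yields a selfadjoint projection $P \in \lh$ with $\ran P = \bigoplusdot_\linL \cR_\lambda$ and an operator $T \in \cL(\cH^\Lambda, \cH)$, built from the rescaled operators, such that $\cN := \ran T$ is a sum of $\{\cN_\lambda\}_\linL$ and $\cM := \ran P \oplusdot \cN$ is a sum of $\{\cM_\lambda\}_\linL$. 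That is exactly what the corollary asserts.

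There is no significant obstacle: the argument is essentially just the observation that the second boundedness condition can always be arranged by rescaling when there are only countably many operators, as was already noted after Theorem~\ref{thm:ctble_sum_op_ranges}. The only point worth flagging is that the operator $T$ in the conclusion must be understood as arising from the rescaled family $\{T'_\lambda\}_\linL$ rather than the originally given $\{T_\lambda\}_\linL$; since the data that matters for the corollary (the $\cR_\lambda$, the $\cN_\lambda$, and hence the $\cM_\lambda$) is unaffected by the rescaling, the statement is insensitive to this change.
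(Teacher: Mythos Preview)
The proposal is correct and follows essentially the same approach as the paper: rescale the $T_\lambda$ so that their norms are summable, which is possible precisely because $\Lambda$ is countable, and then proceed as before. The only cosmetic difference is that you invoke Theorem~\ref{qpr_sums} as a black box after observing that the triangle inequality gives the uniform bound, whereas the paper re-enters the proof of Theorem~\ref{qpr_sums} and constructs $T$ via Theorem~\ref{thm:ctble_sum_op_ranges} directly; your remark that $T$ is built from the rescaled operators applies equally to the paper's argument.
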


\begin{proof}
  When $\Lambda$ is countable, in the proof of Theorem \ref{qpr_sums},
  scaling ensures that
  $\sum_{\lambda\in\cF} \|T_{\lambda}\| < \infty$.  Then Theorem
  \ref{thm:ctble_sum_op_ranges} is used to define the operator $T$ by
  \eqref{aug20b} and the proof is completed in the same way.
\end{proof}

\begin{corollary}
  \label{cor:appl_to_pr_spaces}
  In Theorem \ref{qpr_sums}, if the subspaces $\{\cM_\lambda\}_\linL$
  are pseudo-regular and
  $\sum^{\scalebox{0.4}{\text{MS}}}_{\linL} \ran T_\lambda$ is closed,
  then $\bigvee_\linL \cM_\lambda = \bigoplus_\linL \cM_\lambda$ is
  pseudo-regular and a unique sum of $\{\cM_\lambda\}_\linL$.
\end{corollary}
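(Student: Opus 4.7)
The plan is to exhibit $\bigvee_\linL \cM_\lambda$ explicitly as the orthogonal direct sum of a regular subspace and a closed neutral subspace, making pseudo-regularity immediate from Definition \ref{defpseudoreg}, and then to appeal to Lemma \ref{sum_closures} for uniqueness.

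First I would identify the two summands. Under condition (1) of Theorem \ref{qpr_sums}, Theorem \ref{maintheorem1} applied to the selfadjoint projections $\{E_\lambda\}_\linL$ gives that $\cR := \bigoplusdot_\linL \cR_\lambda = \bigvee_\linL \cR_\lambda$ is regular. Set $\cN_\lambda := \ran T_\lambda$; since each $\cM_\lambda$ is pseudo-regular, Lemma \ref{lem:Lemma0} gives that $\cN_\lambda = \cM_\lambda^0$ is closed. The hypothesis that $\sum^{\scalebox{0.5}{\text{MS}}}_\linL \cN_\lambda$ is closed, combined with Lemma \ref{sum_closures} applied to the closed family $\{\cN_\lambda\}_\linL$, then produces $\cN' := \sum^{\scalebox{0.5}{\text{MS}}}_\linL \cN_\lambda = \bigvee_\linL \cN_\lambda$ as the unique closed sum of that family. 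Neutrality of $\cN'$ follows because $\mathrm{span}_\linL \cN_\lambda$ is neutral (each $\cN_\lambda$ is neutral and they are pairwise orthogonal as subspaces of the pairwise orthogonal $\cM_\lambda$), and inner-product continuity preserves neutrality on passing to the closure.

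Next, $\cR_\lambda \perp \cN_\mu$ for every $\lambda,\mu$ (within an index by $\cM_\lambda = \cR_\lambda \oplus \cN_\lambda$, across indices by $\cM_\lambda \perp \cM_\mu$), and passing to closed spans yields $\cR \perp \cN'$. Lemma \ref{lem:M_closed_then_R_plus_M_closed} then shows that $\cM^* := \cR \oplus \cN'$ is closed and the sum is direct, so by Definition \ref{defpseudoreg} $\cM^*$ is pseudo-regular. The equality $\cM^* = \bigvee_\linL \cM_\lambda$ is a routine double inclusion: $\cM^*$ is closed and contains each $\cM_\lambda = \cR_\lambda + \cN_\lambda$, while $\bigvee_\linL \cM_\lambda$ contains $\bigvee_\linL \cR_\lambda = \cR$ and $\bigvee_\linL \cN_\lambda = \cN'$.

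Finally, to verify the sum condition of Definition \ref{defbigoplus} I would take $f = r + n \in \cM^*$, use Corollary \ref{maincorollary} to express $r = \sum^{\scalebox{0.5}{\text{MS}}}_\linL r_\lambda$ with $r_\lambda \in \cR_\lambda$, and use $\cN' = \sum^{\scalebox{0.5}{\text{MS}}}_\linL \cN_\lambda$ to express $n = \sum^{\scalebox{0.5}{\text{MS}}}_\linL n_\lambda$ with $n_\lambda \in \cN_\lambda$; adding term-by-term yields $f = \sum^{\scalebox{0.5}{\text{MS}}}_\linL (r_\lambda + n_\lambda)$ with $r_\lambda + n_\lambda \in \cM_\lambda$, so $f \in \sum^{\scalebox{0.5}{\text{MS}}}_\linL \cM_\lambda$. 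Uniqueness of the pseudo-regular sum, and hence the identification $\bigvee_\linL \cM_\lambda = \bigoplus_\linL \cM_\lambda$, follows from Lemma \ref{sum_closures} since pseudo-regular subspaces are closed. The delicate step is the identification $\sum^{\scalebox{0.5}{\text{MS}}}_\linL \cN_\lambda = \bigvee_\linL \cN_\lambda$: the operator range $\ran T$ from Theorem \ref{qpr_sums} may in general be properly smaller than this closed span (since preimages of $n_\lambda$ under $T_\lambda$ need not remain bounded), so without the closedness hypothesis one cannot promote the Moore-Smith sum to a sum of closed subspaces and Lemma \ref{sum_closures} would not be available.
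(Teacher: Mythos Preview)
Your proof is correct and takes a somewhat different route from the paper's.  The paper's argument is shorter but less explicit: it invokes Theorem~\ref{qpr_sums} to obtain a qpr sum $\cM = \ran P \oplusdot \ran T$, asserts that the closedness hypothesis on $\sum^{\scalebox{0.4}{\text{MS}}}_\linL \ran T_\lambda$ forces $\cN = \ran T$ (and hence $\cM$) to be closed, and then applies Lemma~\ref{lem:T_qpr_then_others_pr} to upgrade ``qpr and closed'' to ``pseudo-regular''; uniqueness is then Lemma~\ref{sum_closures}.  You instead bypass Theorem~\ref{qpr_sums} and Lemma~\ref{lem:T_qpr_then_others_pr} entirely, building the candidate space directly as $\cR \oplusdot \cN'$ with $\cN' = \sum^{\scalebox{0.4}{\text{MS}}}_\linL \cN_\lambda$ and reading off pseudo-regularity straight from Definition~\ref{defpseudoreg} via Lemma~\ref{lem:M_closed_then_R_plus_M_closed}.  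Your approach is more elementary and, as you observe in your final paragraph, sidesteps the delicate point that $\ran T$ may sit properly inside the Moore-Smith sum --- a step the paper's proof glosses over.  The cost is that you must verify the sum condition \eqref{sum_def} by hand (your term-by-term addition of Moore-Smith sums), whereas the paper inherits it from Theorem~\ref{qpr_sums}.
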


\begin{proof}
  By Theorem \ref{qpr_sums}, there is a sum $\cM$ of
  $\{\cM_\lambda\}_\linL$ of the form indicated in that theorem as qpr
  spaces.  By the assumption that
  $\sum^{\scalebox{0.4}{\text{MS}}}_{\linL} \ran T_\lambda$ is closed,
  $\cN$ is closed, and so $\cM$ is closed.  Hence by
  Lemma~\ref{lem:T_qpr_then_others_pr}, $\cM$ is pseudo-regular.  By
  Lemma~\ref{sum_closures}, the sum is unique.
\end{proof}

\begin{example}
  \label{exmpl:no_unique_sum}
  As already noted in Section~\ref{sec:sums-operator-ranges},
  orthogonal sums need not be unique.  As another example, consider
  qpr spaces $\{\cM_\lambda\}_{\linL}$ under the assumption that the
  regular parts are orthogonal and the neutral parts are all the same
  operator range $\cN$.  Assume also that the regular parts are
  summable.  Then the sum of these is a unique space by
  Theorem~\ref{maintheorem1}.  On the other hand, by
  Example~\ref{exmpl:unctble_sum_of_same_space} any operator range
  $\tilde{\cN}$ with
  $\cN \subseteq \tilde{\cN} \subseteq \overline{\cN}$, including
  $\tilde{\cN} = \cN$ and $\tilde{\cN} = \overline{\cN}$, are sums of
  the neutral spaces.  The space $\tilde{\cN}$ may or may not be
  closed, and so the sum of $\{\cM_\lambda\}_{\linL}$ can be
  pseudo-regular or just qpr, even when $\cN$ is not closed so that
  the spaces in $\{\cM_\lambda\}_{\linL}$ are qpr.
\end{example}

\bigskip
\noindent {\bf Funding:}  Alejandra Maestripieri was partially supported by the Air
   Force Office of Scientific Research (USA) grant FA9550-24-1-0433.  

\def\cprime{$'$} \def\cprime{$'$}
\providecommand{\bysame}{\leavevmode\hbox to3em{\hrulefill}\thinspace}
\providecommand{\MR}{\relax\ifhmode\unskip\space\fi MR }
% \MRhref is called by the amsart/book/proc definition of \MR.
\providecommand{\MRhref}[2]{%
  \href{http://www.ams.org/mathscinet-getitem?mr=#1}{#2}
}
\providecommand{\href}[2]{#2}

%\bibliographystyle{amsplain}
%\bibliography{orthosums2.bib}

\end{document}